\theoremstyle{plain}
\newtheorem{thm}{Theorem}[section]
\newtheorem{lem}[thm]{Lemma}
\newtheorem{prop}[thm]{Proposition}
\theoremstyle{definition}
\newtheorem{defn}[thm]{Definition}
\mathchardef\semic="303B
\newcommand{\wedg}{\mathbin{\scriptstyle{\wedge}}}
\newcommand{\lctr}{\mathbin{\lrcorner}}
\newcommand{\R}{{\mathbf R}}
\newcommand{\C}{{\mathbf C}}
\newcommand{\Z}{{\mathbf Z}}
\newcommand{\mH}{{\mathcal H}}
\newcommand{\mD}{{\mathcal D}}
\newcommand{\mL}{{\mathcal L}}
\DeclareMathOperator{\re}{Re}
\newcommand{\sett}[2]{ \{ #1 \, \semic \, #2 \} }
\newcommand{\supp}{\text{{\rm supp}}\,}
\newcommand{\dist}{\text{{\rm dist}}\,}
\newcommand{\nul}{\textsf{N}}
\newcommand{\ran}{\textsf{R}}
\newcommand{\dom}{\textsf{D}}
\newcommand{\clos}[1]{\overline{#1}}
\newcommand{\barint}{\mbox{$ave \int$}}
\newcommand{\divv}{{\text{{\rm div}}}}
\newcommand{\curl}{{\text{{\rm curl}}}}
\newcommand{\ta}{{\scriptscriptstyle \parallel}}
\newcommand{\no}{{\scriptscriptstyle\perp}}
\newcommand{\pd}{\partial}
\newcommand{\linj}[1]{\underline{#1}}
\newcommand{\lf}{\overline{f}}
\newcommand{\hf}{\hat f}
\newcommand{\loc}{\text{{\rm loc}}}
\newcommand{\tN}{\widetilde N_*}
\def\barint_#1{\mathchoice
            {\mathop{\vrule width 6pt
height 3 pt depth -2.5pt
                    \kern -8.8pt
\intop}\nolimits_{#1}}%
            {\mathop{\vrule width 5pt height
3 pt depth -2.6pt
                    \kern -6.5pt
\intop}\nolimits_{#1}}%
            {\mathop{\vrule width 5pt height
3 pt depth -2.6pt
                    \kern -6pt
\intop}\nolimits_{#1}}%
            {\mathop{\vrule width 5pt height
3 pt depth -2.6pt
          \kern -6pt \intop}\nolimits_{#1}}}
\definecolor{gr}{rgb}   {0.,   0.8,   0. } 
\definecolor{bl}{rgb}   {0.,   0.5,   1. } 
\definecolor{mg}{rgb}   {0.7,  0.,    0.7}
\begin{document}

\title[Estimates for operators beyond divergence form equations]{Square function and maximal function estimates for operators beyond divergence form equations}
\author[Andreas Ros\'en]{Andreas Ros\'en$\,^1$}
\thanks{$^1\,$Formerly Andreas Axelsson}
\address{Andreas Ros\'en, Matematiska institutionen, Link\"opings universitet, 581 83 Link\"oping, Sweden}
\email{andreas.rosen@liu.se}

\begin{abstract}
We prove square function estimates in $L_2$ for general operators of the form $B_1D_1+D_2B_2$,
where $D_i$ are partially elliptic constant coefficient homogeneous first order self-adjoint differential operators with orthogonal ranges,
and $B_i$ are bounded accretive multiplication operators, extending earlier estimates from the Kato 
square root problem to a wider class of operators.
The main novelty is that $B_1$ and $B_2$ are not assumed to be related in any way.
We show how these operators appear naturally from exterior differential 
systems with boundary data in $L_2$.
We also prove non-tangential maximal function estimates, where our proof needs only 
off-diagonal decay of resolvents in $L_2$, unlike earlier proofs which relied on interpolation and $L_p$
estimates.
\end{abstract}



\thanks{Supported by Grant 621-2011-3744 from the Swedish research council, VR}
\maketitle

\section{Introduction}  \label{sec:intro}

In this paper, we generalize the square function estimates from the Kato square root problem, to a wider class of operators on $L_2(\R^n; \C^N)$, $n,N\ge 1$.
Previously, estimates were known for perturbations of a homogeneous first order constant coefficients self-adjoint partial differential operator $D$, of the form $DB$ or $BD$, with $B$ being a bounded multiplication operator which is accretive on the range of $D$.
Let us first recall how such operators appear in connection with divergence form equations.
The celebrated Kato square root estimate
$$
  \|\sqrt{-\divv A \nabla} u\|_2\approx \|\nabla u\|_2
$$
for divergence form operators with general bounded accretive coefficients $A\in L_\infty(\R^n; \mL(\C^n))$,
were proved in one dimension by Coifman, McIntosh and Meyer~\cite{CMcM} and in full generality
by Auscher, Hofmann, Lacey, McIntosh and Tchamitchian~\cite{AHLMcT}.
It can be written $\|\sqrt{(BD)^2} [u, 0] \|_2\approx \|BD [u, 0]\|_2$, with 
$D= \begin{bmatrix} 0 & \divv \\ -\nabla & 0 \end{bmatrix}$ and $B= \begin{bmatrix} I & 0 \\ 0 & A \end{bmatrix}$, acting on vectors of dimension $N= 1+n$.
This estimate is in turn a consequence of square function estimates for the operator $BD$.
This approach to the Kato square root estimate was developed in \cite{AMcN, AKMc}.

Operators of the form $DB$ and $BD$ appear not only in connection with divergence form operators
on $\R^n$, but also in connection with divergence form equations on the half-space $\R^{1+n}_+:= \sett{(t,x)}{t>0, x\in\R^n}$,
with $L_2(\R^n)$ or $\dot H^1(\R^n)$ boundary data.
We recall the following approach to boundary value problems from \cite{AAM, AAH}.
Consider a divergence form equation
$$
   \divv_{t,x} A(t, x) \nabla_{t,x} u(t,x)=0, \qquad t>0, x\in\R^n,
$$
with coefficients $A= \begin{bmatrix} a & b \\ c & d \end{bmatrix}$, splitting $\C^{1+n}= \C\oplus \C^n$.
Write $e_0, e_1, \ldots, e_n$ for the standard basis in $\R^{1+n}$, with coordinates $x_0=t, x_1,\ldots x_n$,
and $f_\no:= e_0\cdot f$ for the normal component and $f_\ta:= f-f_\no e_0$ for the tangential part.

On the one hand, at the level of $\dot H^1(\R^n)$ boundary data $u|_{\R^n}$, we consider the conormal gradient
$$
  f= \begin{bmatrix} f_\no \\ f_\ta \end{bmatrix} := \begin{bmatrix} a\pd_t u + b\nabla_x u \\ \nabla_x u \end{bmatrix}.
$$
In terms of $f$, the divergence form equation is
$
  \pd_t f_\no + \divv_x ( c( a^{-1}f_\no-a^{-1}b f_\ta) + df_\ta)=0.
$
The conormal gradient $f$, with the inward conormal derivative as normal component $f_\no$, is in one-to-one correspondence with the potential $u$, modulo curl-freeness and constants.
Written in terms of $f$, the curl-free condition is 
$
  \pd_t f_\ta= \nabla_x( a^{-1}f_\no-a^{-1}b f_\ta )$,
  $\curl_x f_\ta=0$.
In vector notation this means that the divergence form equation for $u$ is equivalent to the 
vector valued ordinary differential equation 
$$
\pd_t f+ DBf=0
$$ 
for $f$
under the constraint $f\in\clos{\ran(D)}$ for each $t>0$,
with $D:= \begin{bmatrix} 0 & \divv_x \\ -\nabla_x & 0 \end{bmatrix}$ acting along $\R^n$ and $B:= \begin{bmatrix} a^{-1} & -a^{-1}b \\ ca^{-1} & d-ca^{-1}b \end{bmatrix}$ being a multiplication operator,
which turns out to be accretive if and only if $A$ is so.

On the other hand, at the level of $L_2(\R^n)$ boundary data $u|_{\R^n}$, we can write
$$
  f_\no= (A\nabla u)_\no =: \divv_x v_\ta,
$$
with a tangential vector field $v_\ta$, for each fixed $t$, assuming appropriate decay of $u$ at infinity, since $\int_{\R^n} f_\no dx=0$ by the divergence theorem.
Inserting this ansatz into the divergence form equation and commuting $\pd_t$ and $\divv_x$
yields $\divv_x(\pd_tv_\ta+ (A\nabla_{t,x}u)_\ta)=0$.
Since $v_\ta$ is only defined modulo tangential divergence free vector fields, we may choose it so that
$\pd_tv_\ta+ (A\nabla_{t,x}u)_\ta=0$.
In vector notation this means that the divergence form equation for $u$ is equivalent to the 
vector valued ordinary differential equation 
$$
\pd_t v+ BDv=0,
$$ 
for the vector field
$$
  v= \begin{bmatrix} v_\no \\ v_\ta \end{bmatrix} := \begin{bmatrix} -u \\ \divv_x^{-1}(a\pd_t u+ b\nabla_x u) \end{bmatrix}.
$$
One can view both $\pd_t f+ DBf=0$ and $\pd_t v+ BDv=0$ as generalized Cauchy--Riemann systems. 
In particular, the $n$ components of $v_\ta$ should be viewed as some generalized harmonic conjugate functions.

Estimates of operators of the form $DB$ or $BD$ are by now well understood, see \cite{AKMc, AAM, elAAM}.
The aim of this paper is to prove fundamental estimates for more general operators of the
form
$$
  B_1D_1+D_2B_2,
$$ 
which appear for example when, similar to above, writing a more general exterior differential system as a vector valued
ordinary differential equation in the variable transversal to the boundary.
See Section~\ref{sec:diffforms}.
We assume that $D_1D_2=0$ but, unlike earlier results \cite{elAAM}, not that $D_2B_2B_1D_1=0$.

We next formulate our results in detail. Consider four operators $D_1, D_2, B_1$ and $B_2$ 
acting in the Hilbert space $L_2(\R^n; \C^N)$ with norm $\|\cdot\|_2$, where $n,N\ge 1$.
We assume the following.
\begin{itemize}
\item The operators $D_1$ and $D_2$ are constant coefficient homogeneous first order differential operators which are self-adjoint and such that $\ran(D_2)\subset \nul(D_1)$.
Assume the partial ellipticity estimates $\|D_i f\|_2\gtrsim\|f\|_{\dot H^1(\R^n)}$ for all $f\in \ran(D_i)$, $i=1,2$.
\item The operators $B_1$ and $B_2$ are bounded multiplication operators $B_i: f(x)\mapsto B_i(x)f(x)$,
$x\in \R^n$, where $B_i(\cdot)\in L_\infty(\R^n;\mL(\C^N))$, $i=1,2$.
Assume the partial accretivity estimates
$\re (B_i f, f)\gtrsim \|f\|_2^2$ for all $f\in \ran(D_i)$, $i=1,2$.
\end{itemize}
Denote by $\omega_i:= \sup_{f\in\ran(D_i)\setminus\{0\}} |\arg(B_i f,f)|<\pi/2$ the angle of accretivity for $B_i$ on $\ran(D_i)$, $i=1,2$.
For $0\le \alpha<\pi/2$, define the closed sectors $S_{\alpha+}:= \sett{\lambda\in\C\setminus\{0\}}{|\arg\lambda|\le \alpha}\cup\{0\}$ and $S_{\alpha-}:= -S_{\alpha+}$, and bisectors $S_\alpha:= S_{\alpha+}\cup S_{\alpha-}$, as well as the corresponding open sectors/bisectors $S^o_{\alpha\pm}$, $S^o_{\alpha}$, being the interior of $S_{\alpha\pm}$, $S_{\alpha}$ respectively.

Denote by $\ran(\cdot)$, $\nul(\cdot)$ and $\dom(\cdot)$ the range, null space and domain of an operator.
Define the operator
$$
  T:= B_1D_1+ D_2B_2, \qquad \dom(T):= \sett{f\in \dom(D_1)}{B_2 f\in \dom(D_2)}.
$$
The definition of operators $\psi(tT)$, $\phi(tT)$ in the functional calculus of $T$, is found in Section~\ref{sec:ops}.
For a function $h$ on $\R^{1+n}_+$, define the ($L_2$ Whitney averaged) non-tangential maximal function 
$$
  \tN h(x):= \sup_{t>0} \left( \iint_{W(t,x)} |h(s,y)|^2 \frac{dsdy}{|W(t,x)|} \right)^{1/2},\qquad x\in \R^n,
$$
where $W(t,x)$ denotes a Whitney region around $(t,x)$, for example $W(t,x)= B(x,t)\times (t/2, 2t)$.

\begin{thm}  \label{thm:main}
Under the above hypothesis, $T$ is a closed and densely defined operator in $L_2(\R^n; \C^N)$, with spectrum
$\sigma(T)\subset S_\omega$, where $\omega:= \max(\omega_1,\omega_2)$,
and resolvent estimates
$\|(\lambda I-T)^{-1}\|\lesssim 1/\dist(\lambda, \sigma(T))$.

Moreover, the following estimates of holomorphic functions of the operator $T$ hold.
\begin{itemize}
\item We have square function estimates
$$
  \int_{\R^n}\int_0^\infty |\psi(tT)f(x)|^2 \frac {dtdx}t \lesssim \int_{\R^n} |f(x)|^2 dx, \qquad f\in \clos{\ran(T)},
$$
for any holomorphic symbol $\psi:S_\mu^o\to \C$, $\omega<\mu<\pi/2$, with estimates 
$|\psi(\lambda)|\lesssim \min(|\lambda|^s, |\lambda|^{-s})$ for some $s>0$.

If furthermore $\psi|_{S_{\omega+}}$ and $\psi|_{S_{\omega-}}$ are not identically zero, then the reverse square function estimates $\gtrsim$ hold for all $f\in \clos{\ran(T)}$.
\item
We have non-tangential maximal function estimates
$$
  \int_{\R^n} |\tN(\phi(tT)f)(x)|^2 dx \approx \int_{\R^n}|f(x)|^2 dx, \qquad f\in \clos{\ran(T)},
$$
for any holomorphic symbol $\phi:S_\mu^o\to \C$, $\omega<\mu<\pi/2$, 
with estimates $|\phi(\lambda)-1|\lesssim |\lambda|^s$ and $|\phi(\lambda)|\lesssim |\lambda|^{-s}$, for some $s>0$.
\end{itemize}
\end{thm}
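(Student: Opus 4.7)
The plan is to follow the framework of \cite{AKMc, AAM} for operators of type $BD$ or $DB$, adapting it to $T=B_1D_1+D_2B_2$. The key new difficulty, relative to \cite{elAAM}, is that the cross term $D_2B_2B_1D_1$ in $T^2$ need not vanish, so the operator does not decouple into two independent pieces. Basic operator-theoretic facts come first: closedness and density of $\dom(T)$ follow from the closedness of $D_1,D_2$ and boundedness of the $B_i$, and the orthogonal projections $P_1,P_2$ onto $\clos{\ran(D_1)}$ and $\clos{\ran(D_2)}$ are mutually orthogonal because $\ran(D_2)\subset \nul(D_1)=\clos{\ran(D_1)}^\perp$. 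For $\lambda\notin S_\omega$, the resolvent bound is obtained by pairing $(\lambda I-T)f=g$ against $P_1B_1^*f+P_2B_2^*f$ and invoking the partial accretivity of $B_i$ on $\ran(D_i)$; standard $L_2$ off-diagonal exponential decay of $(I+itT)^{-1}$ on balls then follows from a Lipschitz-cutoff commutator argument, using that $[D_i,\eta]$ is multiplication bounded by $\|\nabla\eta\|_\infty$.

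The heart of the argument is the quadratic estimate
\[
\int_0^\infty \|tT(I+t^2T^2)^{-1}f\|_2^2\,\frac{dt}{t}\lesssim \|f\|_2^2,\qquad f\in \clos{\ran(T)}.
\]
Following \cite{AKMc, AAM}, I would reduce this to a Carleson measure bound for a principal part operator $\gamma_t(x)$ obtained by freezing $B_1,B_2$ and testing against constants, the remainder being controlled by the off-diagonal decay and the partial ellipticity $\|D_if\|_2\gtrsim\|f\|_{\dot H^1}$ on $\ran(D_i)$. The Carleson bound itself is proved by a local $Tb$ argument, separately in the $\ran(D_1)$ and $\ran(D_2)$ directions, with test functions built from resolvents applied to constants on dyadic cubes. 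The non-vanishing cross term $D_2B_2B_1D_1$ is absorbed into the off-diagonal piece, using $D_1D_2=0$ and $P_1P_2=0$ to preserve cancellation. The reverse estimate follows by duality applied to $T^*=D_1B_1^*+B_2^*D_2$, which satisfies the same hypotheses with the roles of the indices swapped. McIntosh's convergence theorem then yields a bounded $H^\infty$ functional calculus on $\clos{\ran(T)}$, from which the full range of square function estimates follows.

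For the non-tangential maximal function estimates, the upper bound is obtained from Carleson duality combined with the square function estimate and the $L_2$ off-diagonal bounds for $\phi(tT)$, the latter derived from the resolvent via a Dunford integral over a contour in $S_\mu^o$; this avoids any $L_p$ extrapolation, which is the main novelty announced in the introduction. The matching lower bound follows from a Fatou-type argument combined with the strong convergence $\phi(tT)f\to f$ as $t\to 0$, valid on $\clos{\ran(T)}$ by the functional calculus and the normalization $\phi(0)=1$. The main obstacle is verifying the local $Tb$ hypotheses in the presence of the cross term: once the principal part $\gamma_t$ is correctly identified and shown to fit into the $Tb$ framework, the remaining arguments are natural adaptations of those in \cite{AKMc, AAM, elAAM}.
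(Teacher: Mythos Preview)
Your proposal misses the paper's central structural idea and contains two concrete gaps. The paper does \emph{not} attack $T$ directly; instead it exhibits a topological splitting $L_2=\mH_0^{B_2}\oplus B_1\mH_1\oplus\mH_2$ (with $\mH_i=\clos{\ran(D_i)}$) in which, thanks to $D_1D_2=0$, the resolvent $(I+itT)^{-1}$ is block lower-triangular with diagonal blocks $I$, $R_t^1:=(I+itB_1D_1)^{-1}$, $R_t^2:=(I+itD_2B_2)^{-1}$ and off-diagonal block $(R_t^2-I)R_t^1$. Closedness, density, spectrum and resolvent bounds then follow \emph{immediately} from the known theory for $B_1D_1$ and $D_2B_2$. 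Your direct pairing of $(\lambda I-T)f$ against $P_1B_1^*f+P_2B_2^*f$ does not work as written: $(f,P_iB_i^*f)=(B_iP_if,f)$, and since $B_i$ does not preserve $\mH_i$ this is \emph{not} $(B_iP_if,P_if)$, so partial accretivity on $\mH_i$ gives no lower bound. The triangular formula also reduces the square function estimate for $tT(I+t^2T^2)^{-1}$ to square function estimates for $Q_t^1$, $Q_t^2$ (already known) plus the single cross term $\Theta_t=Q_t^2R_t^1B_1$ acting on $\mH_1$; the local $Tb$ argument is then run only for $\Theta_t$, not for the full operator. Your plan to run $Tb$ ``separately in the $\ran(D_1)$ and $\ran(D_2)$ directions'' for $T$ itself is not obviously implementable without this reduction.

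The non-tangential estimate has a genuine gap. ``Carleson duality combined with the square function estimate and $L_2$ off-diagonal bounds'' does not yield $\|\tN(\phi(tT)f)\|_2\lesssim\|f\|_2$: the non-tangential maximal function is an $L^\infty_t$ object and cannot be controlled by $L^2_t$ square functions and off-diagonal decay alone (this is exactly why earlier proofs needed $L_p$ extrapolation). The paper first uses the triangular structure to reduce to $\tN$ bounds for $R_t^1$, $R_t^2$ and $R_t^2R_t^1$ separately, then reduces each $DB$-type piece to the semigroup $e^{-tDB}\chi^+(DB)$, writes $e^{-tDB}\chi^+(DB)f=Dw$ for a potential $w$, and applies a new Caccioppoli-type inequality $\iint|Dw|^2\eta^2\lesssim\iint|w|^2|\nabla\eta|^2$ for solutions of $\partial_t w+BDw=0$. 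This converts the problem into controlling $w/t$, which splits into a square-function piece and a Poincar\'e/maximal-function piece. You will need an analogue of this Caccioppoli step; without it the $\tN$ upper bound does not follow.
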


The estimates in Theorem~\ref{thm:main} go back to the techniques from the solution of the Kato square root problem by Auscher, Hofmann, Lacey, McIntosh and Tchamitchian~\cite{AHLMcT}.
The connection between the Kato square root problem and square function estimates for first order differential operators was developed by Auscher, McIntosh and Nahmod~\cite{AMcN}.
More directly, both the square function and non-tangential maximal function estimates in
Theorem~\ref{thm:main} build on the author's joint work \cite{AAH} with Auscher and Hofmann.

So far, the Kato techniques have been applied to establish square function estimates for three main classes of first order differential operators.

\begin{enumerate}
\item Operators of the form $DB$ and $BD$, with $D$ being a self-adjoint constant coefficient homogeneous first order differential operator, and $B$ being a bounded multiplication operator which is accretive on $\ran(D)$. \label{eq:type1}
\item Operators of the form $\Gamma+ B_1\Gamma^*B_2$, with $\Gamma$ being a nilpotent (that is $\Gamma^2=0$) constant coefficient homogeneous first order differential operator, and $B_1$, $B_2$ being bounded multiplication operators such that 
$\Gamma^*B_2B_1\Gamma^*=0= \Gamma B_1B_2\Gamma$, which are accretive on $\ran(\Gamma^*)$ and $\ran(\Gamma)$ respectively. \label{eq:type2}
\item Operators of the form $B_1D_1+ D_2B_2$, with $D_1, D_2$ being self-adjoint constant coefficient homogeneous first order differential operators with orthogonal ranges, and $B_1$, $B_2$ being bounded  multiplication operators such that $D_2B_2B_1D_1=0$, which are accretive on $\ran(D_1)$ and $\ran(D_2)$ respectively. \label{eq:type3}
\end{enumerate}

Note that for all these classes of operators, there is essentially only one multiplication operator $B$ as we are considering a small generalization of the case $B_2=B_1^{-1}$. This is in constrast to Theorem~\ref{thm:main}, where the two multiplication operators $B_1$ and $B_2$ are independent of each other.

The main example of operators of type \eqref{eq:type2} are Hodge--Dirac operators, with $\Gamma$ being the exterior derivative acting on differential forms, see Section~\ref{sec:diffforms}. 
For operators of type \eqref{eq:type2}, square function estimates were proved in \cite[Thm. 2.7]{AKMc}.
By a simple operator theoretic argument, square function estimates for operators of type \eqref{eq:type1}
follow from such estimates of operators of type \eqref{eq:type2}, as shown in \cite[Thm. 3.1]{AKMc}.
In fact this argument can be reversed. It was shown in \cite[Sec. 8.1]{elAAM} that conversely square function estimates for operators of type \eqref{eq:type2} follow from such estimates for operators of type \eqref{eq:type1}.
Operators of type \eqref{eq:type3} are nothing but a direct sum of two operators of type \eqref{eq:type1},
and hence square function estimates are immediate as shown in \cite[Sec. 8.2]{elAAM}.
The type \eqref{eq:type3} operators first appeared, in disguise, in the work by 
Auscher, Axelsson and Hofmann~\cite{AAH}, where boundary value problems for Dirac equations of the form $(\Gamma+ B_1\Gamma^*B_2)f=0$, $\Gamma$ being the exterior derivative, were studied. Similarly to Section~\ref{sec:diffforms} here, it was shown in \cite[Sec. 8.3]{elAAM}, that solving for the $t$-derivatives, this equation can be written $(\pd_t + (BD_1+ D_2B^{-1})) Uf=0$, under suitable similarity transformation $U$. It should be noted that in \cite{AAH}, $B_1$ and $B_2$ were related in exactly the way so that the associated operator $BD_1+ D_2B^{-1}$ is of type \eqref{eq:type3} and not of the more general form $T$ considered in Theorem~\ref{thm:main}, where the two multiplication operators are independent.

Coming to the non-tangential maximal function estimates in Theorem~\ref{thm:main}, these build 
on the estimates by Auscher, Axelsson and Hofmann in \cite[Prop. 2.56]{AAH}.
Although set in the framework with Dirac equations, what was actually proved there was non-tangential maximal function estimates for operators of type \eqref{eq:type1}, in the special case when
the differential operator is of the form $D= \begin{bmatrix} 0 & \divv \\ -\nabla & 0 \end{bmatrix}$.
Even for operators of type \eqref{eq:type1} with more general $D$, the non-tangential maximal function estimate in Theorem~\ref{thm:main} are new. Also the non-tangential maximal function estimates for general operators of type
\eqref{eq:type2} and \eqref{eq:type3}, which is a special case of Theorem~\ref{thm:main}, are new.

The key idea in the proof of the square function estimates in Theorem~\ref{thm:main} is to use a 
splitting of $L_2$ adapted to the operators $B_1D_1$ and $D_2B_2$, in which the operator $B_1D_1+D_2B_2$ is triangular due to the assumption $D_1D_2=0$.
The proof of the non-tangential maximal function estimates in Theorem~\ref{thm:main} is much inspired
by the proof of \cite[Prop. 2.56]{AAH}. The main difference is that our proof here is a pure $L_2$ proof, in that it only requires $L_2$ off-diagonal decay of resolvents.
In \cite[Prop. 2.56]{AAH}, interpolation theory to prove $L_p$ off-diagonal decay, $p\approx 2$, was needed.
Another novelty is a Caccioppoli type estimate, Lemma~\ref{lem:cacc}, for operators beyond divergence form equations.

The outline of this paper is as follows. In Section~\ref{sec:diffforms}, we show how operators of the form 
$B_1D_1+D_2B_2$ arise naturally in connection with exterior differential systems systems in $\R^{1+n}_+$ for differential forms. The special case of one-forms, that is vector fields, amounts to divergence form equations. 
In Section~\ref{sec:ops} we prove the resolvent estimates for the operator $T$, in Section~\ref{sec:sqfcn} we prove the square function estimates for the operator $T$, and finally in Section~\ref{sec:maxfcn}
we prove the non-tangential maximal function estimates for the operator $T$.
The (roadmap to the) proof of Theorem~\ref{thm:main} is in Section~\ref{sec:maxfcn}.

\section{Resolvent estimates}    \label{sec:ops}

In this section, we establish the basic operator theoretical properties of the operator
$T= B_1D_1+D_2B_2$.
A fundamental observation for the unperturbed operator $D_1+D_2$ is the orthogonal splitting
$$
  L_2(\R^n;\C^N)= (\nul(D_1)\cap \nul(D_2))\oplus \clos{\ran(D_1)}\oplus \clos{\ran(D_2)}=: \mH_0\oplus \mH_1\oplus \mH_2.
$$
The natural perturbation of this splitting which is adapted to the operator $T$ is 
$$
 L_2(\R^n;\C^N)= (\nul(D_1)\cap \nul(D_2B_2))\oplus \clos{\ran(B_1D_1)}\oplus \clos{\ran(D_2)}.
$$

\begin{prop}    \label{prop:split}
We have a topological (but in general not orthogonal) splitting
$$
  L_2(\R^n;\C^N)= \mH_0^{B_2}\oplus B_1 \mH_1\oplus \mH_2,
$$ 
where $\mH_0^{B_2}:= (\mH_0\oplus \mH_2)\cap B_2^{-1}(\mH_0\oplus \mH_1)$, $B_2^{-1} V:= \sett{f\in L_2}{B_2 f\in V}$ and $B_1 V:= \sett{B_1f}{f\in V}$.
\end{prop}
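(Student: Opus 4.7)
My strategy is to construct explicit bounded projections onto the three claimed summands using the orthogonal decomposition $L_2(\R^n;\C^N)=\mH_0\oplus\mH_1\oplus\mH_2$ and the partial accretivity hypotheses on $B_1$ and $B_2$. Write $P_i$ for the orthogonal projection onto $\mH_i$, $i=0,1,2$.

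The starting observation is that $P_1B_1|_{\mH_1}\colon\mH_1\to\mH_1$ and $P_2B_2|_{\mH_2}\colon\mH_2\to\mH_2$ are isomorphisms. Indeed, for $f\in\mH_1$ one has $\re(P_1B_1f,f)=\re(B_1f,P_1f)=\re(B_1f,f)\gtrsim\|f\|_2^2$ by the partial accretivity hypothesis, and the Lax--Milgram style bound $\|P_1B_1f\|_2\gtrsim\|f\|_2$ together with boundedness of $P_1B_1$ forces $P_1B_1|_{\mH_1}$ to be invertible with bounded inverse. The same reasoning applies to $P_2B_2|_{\mH_2}$.

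Given $f\in L_2$, the decomposition is built in two steps. First set
$$
  f_1:=(P_1B_1|_{\mH_1})^{-1}P_1 f\in\mH_1,\qquad g:=f-B_1 f_1.
$$
Since $P_1B_1f_1=P_1f$, we have $P_1g=0$, i.e.\ $g\in\mH_0\oplus\mH_2$. Next set
$$
  f_2:=(P_2B_2|_{\mH_2})^{-1}P_2B_2g\in\mH_2,\qquad f_0:=g-f_2.
$$
By construction $f_0\in\mH_0\oplus\mH_2$ and $P_2B_2f_0=P_2B_2g-P_2B_2f_2=0$, which means $B_2f_0\in\mH_2^\perp=\mH_0\oplus\mH_1$, so $f_0\in\mH_0^{B_2}$. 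This yields the decomposition $f=f_0+B_1f_1+f_2$, and each of the three maps $f\mapsto f_i$ is a composition of bounded operators, giving the topological direct sum bound.

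For uniqueness, suppose $0=f_0+B_1f_1+f_2$ with $f_0\in\mH_0^{B_2}$, $f_1\in\mH_1$, $f_2\in\mH_2$. Since $f_0\in\mH_0\oplus\mH_2$ and $f_2\in\mH_2$ are both orthogonal to $\mH_1$, applying $P_1$ yields $P_1B_1f_1=0$, whence $f_1=0$ by the isomorphism property. The remaining identity $f_0+f_2=0$, combined with $P_2B_2f_0=0$ (which is exactly the defining condition for $\mH_0^{B_2}$), gives $P_2B_2f_2=0$, and the second isomorphism forces $f_2=0$ and hence $f_0=0$. The only real subtlety is checking that $\mH_0^{B_2}$ is closed, but this is immediate since $\mH_0^{B_2}=(\mH_0\oplus\mH_2)\cap\nul(P_2B_2)$ is the intersection of two closed subspaces; the rest of the argument is just unwinding the definitions and exploiting the two accretivity-induced isomorphisms.
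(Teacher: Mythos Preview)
Your proof is correct. The approach is essentially the same as the paper's, just organized differently: the paper first records the two two-way splittings $L_2=B_1\mH_1\oplus(\mH_0\oplus\mH_2)$ and $L_2=B_2^{-1}(\mH_0\oplus\mH_1)\oplus\mH_2$ (citing the accretivity argument from \cite{AAM}), then invokes an abstract Banach-space lemma (Lemma~\ref{lem:splitbanach}) saying that two splittings $X=X_1\oplus X_2=X_3\oplus X_4$ with $X_4\subset X_2$ combine into $X=X_1\oplus(X_2\cap X_3)\oplus X_4$. Your two steps are precisely the explicit projection formulas for these two splittings: $f\mapsto B_1(P_1B_1|_{\mH_1})^{-1}P_1f$ is the projection onto $B_1\mH_1$ along $\mH_0\oplus\mH_2$, and $g\mapsto(P_2B_2|_{\mH_2})^{-1}P_2B_2g$ is the projection onto $\mH_2$ along $B_2^{-1}(\mH_0\oplus\mH_1)$. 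The paper's version is more modular (the combining lemma is reusable), while yours is self-contained and makes the bounded projections explicit, which can be convenient downstream.
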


\begin{proof}
  We observe that we have two topological splittings 
$$
  L_2= B_1\mH_1\oplus (\mH_0\oplus \mH_2)
$$  
  and $L_2= B_2^*\mH_2\oplus (\mH_0\oplus \mH_1)$,
since $B_1$ is accretive on $\mH_1$ and $B_2$, and hence $B_2^*$, is accretive on $\mH_2$.
See for example \cite[Prop. 3.3]{AAM}.
Taking orthogonal complements in the second splitting, we obtain a third topological splitting
$$
  L_2= B_2^{-1}(\mH_0\oplus \mH_1)\oplus \mH_2,
$$
since $B_2^{-1}(\mH_0\oplus \mH_1)= (B_2^*\mH_2)^\perp$ and $\mH_2= (\mH_0\oplus \mH_1)^\perp$.

The result is now a consequence of Lemma~\ref{lem:splitbanach} below,
with $X_1= B_1\mH_1$, $X_2= \mH_0\oplus \mH_2$, $X_3= B_2^{-1}(\mH_0\oplus \mH_1)$ and $X_4= \mH_2$.
\end{proof}

\begin{lem}  \label{lem:splitbanach} 
Assume that a Banach space $X$ splits topologically in two ways
$$
  X= X_1\oplus X_2= X_3\oplus X_4,
$$
into closed subspaces such that $X_4\subset X_2$.
Then $X$ splits topologically into three closed subspaces
$$
  X= X_1\oplus (X_2\cap X_3)\oplus X_4.
$$
\end{lem}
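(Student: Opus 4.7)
The plan is to build the three-term decomposition by applying the two given topological splittings in succession, and then to express the new projections as compositions of the old ones so that boundedness is automatic. The inclusion $X_4\subset X_2$ is what forces the middle component to land in $X_2\cap X_3$; this is the one spot where the hypothesis gets used in an essential way, and it is also the only slightly delicate point in the argument.

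For existence, given $x\in X$, I would first apply the first splitting to write $x=x_1+x_2$ with $x_1\in X_1$, $x_2\in X_2$, and then apply the second splitting to the vector $x_2$ to get $x_2=y+x_4$ with $y\in X_3$, $x_4\in X_4$. Since $X_4\subset X_2$, both $x_2$ and $x_4$ belong to $X_2$, hence so does $y=x_2-x_4$; combined with $y\in X_3$ this gives $y\in X_2\cap X_3$, and $x=x_1+y+x_4$ is the desired decomposition. For uniqueness, if $0=x_1+y+x_4$ with $x_1\in X_1$, $y\in X_2\cap X_3$, $x_4\in X_4$, then $y+x_4\in X_2$ (again using $X_4\subset X_2$), and the first splitting forces $x_1=0$ and $y+x_4=0$; the second splitting then forces $y=0$ and $x_4=0$. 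Note also that $X_2\cap X_3$ is closed as the intersection of two closed subspaces.

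To show the splitting is topological, I would write the three associated projections as compositions of the bounded projections coming from the two given splittings. Let $P_i\colon X\to X_i$ for $i=1,2$ and $Q_j\colon X\to X_j$ for $j=3,4$ denote the bounded projections from the original splittings. The decomposition constructed above identifies the projection onto $X_1$ (along $(X_2\cap X_3)\oplus X_4$) with $P_1$, the projection onto $X_4$ with $Q_4P_2$, and the projection onto $X_2\cap X_3$ with $P_2-Q_4P_2=Q_3P_2$. Each of these is a composition of bounded operators and is therefore bounded, which establishes the topological splitting.

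The main obstacle, such as it is, will simply be to make clean use of the inclusion $X_4\subset X_2$ twice: once to get $y\in X_2$ in the existence step, and once to get $y+x_4\in X_2$ in the uniqueness step. Everything else is bookkeeping with the bounded projections.
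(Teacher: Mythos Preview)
Your proof is correct and follows essentially the same route as the paper: decompose $x=x_1+x_2$ via the first splitting, then $x_2=y+x_4$ via the second, and use $X_4\subset X_2$ to see $y\in X_2\cap X_3$. The paper is terser---it asserts that the three subspaces are closed and have trivial pairwise intersection (leaving the directness and the topological conclusion to the reader via the open mapping theorem), whereas you spell out uniqueness and explicitly exhibit the three projections as $P_1$, $Q_3P_2$, $Q_4P_2$, which is a clean way to get boundedness without invoking the open mapping theorem.
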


\begin{proof}
It is straightforward to verify that these three subspaces are closed and intersect pair wise only at $0$. Also, given $x\in X$, we can write $x= x_1+x_2$ and $x_2= x_3+x_4$ with $x_i\in X_i$, $i=1, 2, 3, 4$.
We have $x=x_1+x_3+x_4$, with $x_3= x_2-x_4\in X_3\cap X_2$, so the three subspaces span $X$.
\end{proof}

\begin{prop}   \label{prop:nulrang}
For the operator $T$, the null space is $\nul(T)= \mH_0^{B_2}$, the range is $\ran(T)=B_1\ran(D_1)+\ran(D_2)$ and the domain is
$$\dom(T)=  \sett{u_0+u_1+u_2\in \mH_0^{B_2}\oplus B_1 \mH_1\oplus \mH_2}{u_1\in\dom(D_1),  u_1+u_2\in \dom(D_2B_2)}.
$$
\end{prop}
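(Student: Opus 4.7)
The plan is to analyze the action of $T$ in the splitting of Proposition~\ref{prop:split} and read off all three statements from a single formula for $Tf$. The engine of the argument is two null-space containments: $\mH_0\oplus\mH_2\subset\nul(D_1)$, which is immediate since $\mH_2=\clos{\ran(D_2)}\subset\nul(D_1)$ by hypothesis; and $\mH_0\oplus\mH_1\subset\nul(D_2)$, obtained by taking orthogonal complements in $\ran(D_2)\subset\nul(D_1)$ and invoking self-adjointness of $D_1,D_2$.

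Given $f=u_0+u_1+u_2\in\mH_0^{B_2}\oplus B_1\mH_1\oplus\mH_2$, the first containment gives $u_0,u_2\in\nul(D_1)$, so $f\in\dom(D_1)$ iff $u_1\in\dom(D_1)$. The defining property of $\mH_0^{B_2}$ together with the second containment forces $B_2u_0\in\mH_0\oplus\mH_1\subset\nul(D_2)\subset\dom(D_2)$, so $B_2f\in\dom(D_2)$ iff $u_1+u_2\in\dom(D_2B_2)$. This yields the description of $\dom(T)$, and for such $f$ one computes directly
$$
  Tf = B_1D_1u_1 + D_2B_2(u_1+u_2) \in B_1\mH_1\oplus\mH_2,
$$
with the two terms lying in distinct summands of the splitting. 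In particular the $\mH_0^{B_2}$-component of the input is killed, and $T$ is ``lower triangular'' in the sense that $u_2$ only contributes to the $\mH_2$-output.

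For the null space, the directness of $B_1\mH_1\oplus\mH_2$ lets us set $B_1D_1u_1=0$ and $D_2B_2(u_1+u_2)=0$ separately. Accretivity of $B_1$ on $\mH_1$ makes $B_1$ injective there, so $D_1u_1=0$; combined with $u_1\in\mH_1=\clos{\ran(D_1)}\subset\nul(D_1)^\perp$ this gives $u_1=0$. The remaining equation $D_2B_2u_2=0$ says $B_2u_2\in\mH_0\oplus\mH_1$, and together with $B_2u_0\in\mH_0\oplus\mH_1$ this is exactly $u_0+u_2\in\mH_0^{B_2}$. The reverse inclusion $\mH_0^{B_2}\subset\nul(T)$ is immediate from the two containments.

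For the range, $\ran(T)\subset B_1\ran(D_1)+\ran(D_2)$ is clear from the formula. For the converse, given $g=B_1D_1v_1+D_2h$ with $v_1\in\dom(D_1)$, $h\in\dom(D_2)$, I would set $u_1:=P_{\mH_1}v_1$; self-adjointness of $D_1$ makes $P_{\mH_1}$ preserve $\dom(D_1)$ and satisfy $D_1u_1=D_1v_1$. Accretivity of $B_2$ on $\mH_2$ yields the topological splitting $L_2=B_2\mH_2\oplus(\mH_0\oplus\mH_1)$ exactly as in the proof of Proposition~\ref{prop:split}, so I can decompose $h-B_2u_1=B_2u_2+w$ with $u_2\in\mH_2$ and $w\in\mH_0\oplus\mH_1$. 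Then $B_2(u_1+u_2)=h-w\in\dom(D_2)$ (since $w\in\nul(D_2)$) and $D_2B_2(u_1+u_2)=D_2h$, so $f:=u_1+u_2\in\dom(T)$ with $Tf=g$. The only step requiring a genuine idea is this last solvability argument, where one must simultaneously hit the prescribed $D_1$- and $D_2$-images; the lower triangular structure of $T$ together with the auxiliary splitting of $L_2$ by $B_2\mH_2$ makes this possible by solving first for $u_1$ and then for $u_2$.
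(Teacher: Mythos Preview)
Your argument is correct in outline and essentially parallel to the paper's, but there is one slip in the null space computation. You write ``combined with $u_1\in\mH_1=\clos{\ran(D_1)}\subset\nul(D_1)^\perp$ this gives $u_1=0$''. However, in the splitting of Proposition~\ref{prop:split} the component $u_1$ lies in $B_1\mH_1$, not in $\mH_1$. The conclusion $u_1=0$ is still correct, but the reason is different: from $D_1u_1=0$ you get $u_1\in\nul(D_1)=\mH_0\oplus\mH_2$, and then $u_1\in B_1\mH_1\cap(\mH_0\oplus\mH_2)=\{0\}$ by the directness of the splitting. (The paper avoids decomposing $f$ at this step and simply observes that $B_1D_1f=-D_2B_2f$ forces both sides to vanish since $B_1\mH_1\cap\mH_2=\{0\}$.)

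For the range, your argument is a valid variant of the paper's. You first project onto $\mH_1$ to fix $u_1$, then use the splitting $L_2=B_2\mH_2\oplus(\mH_0\oplus\mH_1)$ to solve for $u_2$. The paper instead uses the splitting $L_2=\mH_2\oplus B_2^{-1}(\mH_0\oplus\mH_1)$ (already established in Proposition~\ref{prop:split}) and decomposes both preimages in it simultaneously, taking the $B_2^{-1}(\mH_0\oplus\mH_1)$-part of the $D_1$-preimage and the $\mH_2$-part of the $D_2B_2$-preimage. Both approaches exploit the same triangular structure; the paper's has the slight advantage of reusing a splitting it has already set up, while yours makes the ``solve for $u_1$ first, then $u_2$'' structure more explicit.
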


\begin{proof}
  For the null space, we note that $f\in\nul(T)$ if and only if $B_1D_1f= -D_2B_2f$
  Since $B_1\mH_1\cap \mH_2=\{0\}$, this is equivalent to $f\in \nul(D_1)\cap B_2^{-1}\nul(D_2)$.
  
 Clearly $\ran(T)\subset B_1\ran(D_1)+ \ran(D_2)$. For the converse implication, assume that
 $f_1= B_1D_1 u_1\in B_1\ran(D_1)$ and $f_2= D_2B_2u_2\in \ran(D_2)= \ran(D_2B_2)$. 
 Write $u_1= u_1^1+ u_1^2\in \mH_2\oplus B_2^{-1}(\mH_0\oplus \mH_1)$ and 
 $u_2= u_2^1+ u_2^2\in \mH_2\oplus B_2^{-1}(\mH_0\oplus \mH_1)$.
 Then $(B_1D_1+D_2B_2)(u_1^2+u_2^1)= B_1D_1u_1+ D_2B_2 u_2= f_1+f_2$,
 so $\ran(T)= B_1\ran(D_1) + \ran(D_2)$.

 The result for the domain follows from the facts that $\mH_0^{B_2}\subset \dom (D_1)\cap \dom (D_2B_2)$ and $\mH_2\subset \dom(D_1)$.
\end{proof}

We now express the resolvents of $T$ in terms of the resolvents 
$$
  R^1_t:= (I+itB_1D_1)^{-1}\qquad\text{and}\quad R^2_t:= (I+itD_2B_2)^{-1}
$$
of $B_1D_1$ and $D_2B_2$.
It is known that $\sigma(B_1D_1)\subset S_{\omega_1}\cup\{0\}$ with resolvent estimates
$\|R^1_{t}\|\lesssim 1/(|t|\dist(i/t, S_{\omega_1}))$, and that
$\sigma(D_2B_2)\subset S_{\omega_2}\cup\{0\}$ with resolvent estimates
$\|R^2_{t}\|\lesssim 1/(|t|\dist(i/t, S_{\omega_2}))$.
See for example \cite[Prop. 3.3]{AAM}.

\begin{prop}   \label{prop:blockresolvent}
The operator $T$ is closed and densely defined in $L_2(\R^n;\C^N)$.
  The spectrum is contained in the bisector $S_\omega\cup\{0\}$, $\omega= \max(\omega_1, \omega_2)$, and in the splitting
  $L_2(\R^n;\C^N)= \mH_0^{B_2}\oplus B_1 \mH_1\oplus \mH_2$, the resolvent
  has the expression
$$
  (I+itT)^{-1}=
  \begin{bmatrix} I & 0 & 0 \\ 0 & R^1_t & 0 \\ 0 & (R_t^2-I)R_t^1 & R_t^2 \end{bmatrix},
  \qquad i/t\notin S_\omega\cup\{0\},
$$
with estimates $\|(I+itT)^{-1}\|\lesssim 1/(|t|\dist(i/t, S_\omega))$.
\end{prop}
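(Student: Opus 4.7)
The plan is to compute $T$ in block-matrix form with respect to the splitting from Proposition~\ref{prop:split} and then invert the resulting block lower triangular structure. For $u = u_0 + u_1 + u_2 \in \dom(T)$ with $u_0 \in \mH_0^{B_2}$, $u_1 \in B_1\mH_1$, $u_2 \in \mH_2$, I would first use that $D_1 u_0 = 0 = D_2 B_2 u_0$ by the defining inclusions of $\mH_0^{B_2}$, together with $D_1 u_2 = 0$ from the standing hypothesis $\ran(D_2) \subset \nul(D_1)$, to obtain
$$
  Tu = B_1 D_1 u_1 + D_2 B_2(u_1 + u_2) \in B_1\mH_1 \oplus \mH_2.
$$
Hence $I + itT$ is block lower triangular in the splitting, with diagonal blocks $I$, $(I + itB_1 D_1)|_{B_1\mH_1}$, $(I + itD_2 B_2)|_{\mH_2}$, and a single off-diagonal block $itD_2 B_2|_{B_1\mH_1} : B_1\mH_1 \to \mH_2$.

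With this structure established, I would verify the stated formula for $(I + itT)^{-1}$ by direct composition. Applying $I + itT$ to $u_0 + R^1_t u_1 + \bigl((R^2_t - I)R^1_t u_1 + R^2_t u_2\bigr)$ for arbitrary $u_0 \in \mH_0^{B_2}$, $u_1 \in B_1\mH_1$, $u_2 \in \mH_2$, and noting first that $R^1_t$ preserves $B_1\mH_1$ (since $R^1_t v - v = -itB_1 D_1 R^1_t v \in B_1\ran(D_1)$) and similarly $R^2_t$ preserves $\mH_2$, the three components of the output reduce to $u_0$, $(I + itB_1 D_1) R^1_t u_1 = u_1$, and
$$
  (R^2_t - I) R^1_t u_1 + R^2_t u_2 + itD_2 B_2 R^2_t(R^1_t u_1 + u_2),
$$
respectively. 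The last expression collapses to $u_2$ via the operator identity $itD_2 B_2 R^2_t = I - R^2_t$ on $L_2$. An analogous computation shows the same formula is a left inverse on $\dom(T)$, from which closedness of $T$ and the domain description of Proposition~\ref{prop:nulrang} follow.

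For the remaining claims, I would propagate the estimates $\|R^i_t\| \lesssim 1/(|t|\dist(i/t, S_{\omega_i}))$ from \cite[Prop. 3.3]{AAM} through each block of the formula, using $\dist(i/t, S_\omega) \le \dist(i/t, S_{\omega_i})$ since $S_{\omega_i} \subset S_\omega$, together with the boundedness of the three projections provided by Proposition~\ref{prop:split}. The spectrum inclusion $\sigma(T) \subset S_\omega \cup \{0\}$ then follows via the change of variable $\lambda = i/t$ and the rewriting $\lambda I - T = -(it)^{-1}(I + itT)$; density of $\dom(T)$ follows since it contains $\mH_0^{B_2}$ as well as the dense subspaces $\dom(B_1 D_1) \cap B_1\mH_1$ and $\dom(D_2 B_2) \cap \mH_2$. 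The main technical obstacle is the bookkeeping of unbounded-operator identities in the formula, in particular that $R^2_t$ maps $L_2$ into $\dom(D_2 B_2)$, which is what makes $itD_2 B_2 R^2_t = I - R^2_t$ valid on all of $L_2$ and underpins both the collapse above and the control of the off-diagonal block.
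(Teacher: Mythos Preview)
Your approach is the same as the paper's: both exploit the block lower-triangular form of $I+itT$ in the splitting $\mH_0^{B_2}\oplus B_1\mH_1\oplus\mH_2$, invert it, and read off closedness and the resolvent bound from the formula. The paper solves the system $(I+itT)u=f$ componentwise while you verify the inverse by composition, but these are equivalent presentations of the same computation.

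There is one slip in your density argument. You assert that $\dom(B_1D_1)\cap B_1\mH_1\subset\dom(T)$, but membership in $\dom(T)$ also requires $B_2u_1\in\dom(D_2)$, and since $B_2$ is merely bounded measurable there is no reason this holds for a general $u_1\in\dom(D_1)\cap B_1\mH_1$. (Your other two inclusions are fine: $\mH_0^{B_2}\subset\dom(T)$ by definition, and $\dom(D_2B_2)\cap\mH_2\subset\dom(T)$ because $\mH_2\subset\nul(D_1)\subset\dom(D_1)$.) The simplest repair, available once the resolvent formula is in hand, is to observe that $(I+itT)^{-1}f\in\dom(T)$ for every $f\in L_2$ and that $(I+itT)^{-1}f\to f$ as $t\to 0$, since each block of the matrix converges strongly to the identity and $(R^2_t-I)R^1_t\to 0$ strongly.
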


\begin{proof}
 Consider $(I+itT)u=f$ with $u\in\dom(T)$, and write $u=u_0+u_1+u_2$ and $f=f_0+f_1+f_2$ in the 
 splitting from Proposition~\ref{prop:split}.
 Then
\begin{gather*}
  u_0=f_0, \qquad u_1+it B_1D_1u_1=f_1, \\
  u_2+ itD_2B_2(u_1+u_2)=f_2.
\end{gather*}
Solving for $u$, we equivalently have
\begin{gather*}
  u_0=f_0, \qquad u_1= (I+itB_1D_1)^{-1} f_1, \\
  u_2= (I+itD_2B_2)^{-1}f_2-itD_2B_2(I+itD_2B_2)^{-1}(I+itB_1D_1)^{-1}f_1.
\end{gather*}
This shows that $I+itT$ is injective with the stated resolvent expression and estimate.

To show surjectivity, given $f\in L_2(\R^n;\C^N)$, define 
$u:= f_0+ (I+itD_2B_2)^{-1}f_2+(I+itD_2B_2)^{-1}(I+itB_1D_1)^{-1}f_1$.
Then reversing the above calculation, shows that $u\in\dom(T)$ and $(I+itT)u=f$.
It follows that $I+itT$ is surjective and that $T$ is a closed operator. 
That $T$ is densely defined, follows from the fact that $\dom(T)$ contains the dense subspace
$\dom(D_2B_2)\cap\ran(D_2B_2)$.
\end{proof}

\begin{prop}   \label{prop:duality}
The adjoint of $T$ is
$T^*= B_2^*D_2+ D_1B_1^*$,
with domain $\dom(T^*):= \sett{f\in \dom(D_2)}{B_1^* f\in \dom(D_1)}$.
\end{prop}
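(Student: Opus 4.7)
Write $T':=B_2^*D_2+D_1B_1^*$ with domain $\dom(T'):=\sett{f\in\dom(D_2)}{B_1^*f\in\dom(D_1)}$. The plan is to verify the inclusion $T'\subseteq T^*$ by a direct adjoint computation, then apply Proposition~\ref{prop:blockresolvent} to a swapped configuration to see that $T'$ is itself a closed operator with non-empty resolvent set, and finally invoke the standard fact that a closed extension sharing a resolvent point with the original must coincide with it.

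First I would take $g\in\dom(T)$ and $f\in\dom(T')$, and use the self-adjointness of $D_1$ and $D_2$ together with the hypotheses $f\in\dom(D_2)$ and $B_1^*f\in\dom(D_1)$ to compute
\begin{align*}
(Tg,f) &= (B_1D_1g,f)+(D_2B_2g,f) \\
&= (D_1g,B_1^*f)+(B_2g,D_2f) \\
&= (g,D_1B_1^*f)+(g,B_2^*D_2f)=(g,T'f),
\end{align*}
so that $f\in\dom(T^*)$ with $T^*f=T'f$, i.e.\ $T'\subseteq T^*$.

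Next I would check that the substitution $\tilde D_1:=D_2$, $\tilde D_2:=D_1$, $\tilde B_1:=B_2^*$, $\tilde B_2:=B_1^*$ fits the assumptions of Section~\ref{sec:ops}: the required inclusion $\ran(\tilde D_2)\subset\nul(\tilde D_1)$ reads $\ran(D_1)\subset\nul(D_2)$, which follows from $\ran(D_2)\subset\nul(D_1)$ and the self-adjointness of both $D_i$; partial ellipticity is unchanged; and partial accretivity of $B_i^*$ on $\ran(D_i)$ with the same angle $\omega_i$ is immediate from $\re(B_i^*h,h)=\re(B_ih,h)$. Proposition~\ref{prop:blockresolvent} applied to $T'=\tilde B_1\tilde D_1+\tilde D_2\tilde B_2$ then gives that $T'$ is closed and densely defined with $\sigma(T')\subset S_\omega\cup\{0\}$.

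To finish, fix any $\lambda\notin S_\omega\cup\{0\}$. Then $\lambda\in\rho(T')$ by the previous step, and $\lambda\in\rho(T^*)$ since $\sigma(T^*)=\conj{\sigma(T)}\subset S_\omega\cup\{0\}$, the bisector being invariant under conjugation. Given $u\in\dom(T^*)$, choose $v\in\dom(T')$ with $(\lambda-T')v=(\lambda-T^*)u$; the inclusion $T'\subseteq T^*$ gives $(\lambda-T^*)(v-u)=0$, so $u=v\in\dom(T')$ by injectivity of $\lambda-T^*$. Hence $T^*=T'$, which yields both the claimed action and the domain description. The only non-routine step is the verification of the swapped hypotheses feeding Proposition~\ref{prop:blockresolvent}; once that is in place, the identification of $T^*$ is purely formal.
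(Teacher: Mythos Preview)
Your proof is correct but follows a genuinely different route from the paper. The paper argues the hard inclusion $\dom(T^*)\subseteq\dom(T')$ directly: it decomposes an arbitrary $v\in\dom(T^*)$ in the splitting $((\mH_0\oplus\mH_1)\cap(B_1^*)^{-1}(\mH_0\oplus\mH_2))\oplus B_2^*\mH_2\oplus\mH_1$ dual to Proposition~\ref{prop:split}, then tests the continuity of $u\mapsto(Tu,v)$ first on $u\in\mH_2\cap\dom(D_2B_2)$ to force $v_1\in\dom(D_2)$, and then on general $u$ to force $v_1+v_2\in\dom(D_1B_1^*)$. You instead exploit the structural symmetry: the swapped data $(\tilde D_1,\tilde D_2,\tilde B_1,\tilde B_2)=(D_2,D_1,B_2^*,B_1^*)$ satisfy the same standing hypotheses, so Proposition~\ref{prop:blockresolvent} already supplies closedness and resolvent bounds for $T'$, and then a shared resolvent point with $T^*$ collapses the inclusion $T'\subseteq T^*$ to equality. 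Your argument is cleaner in that it reuses Proposition~\ref{prop:blockresolvent} rather than redoing a domain analysis by hand, and it makes the duality $T\leftrightarrow T^*$ manifest at the level of the data; the paper's argument, on the other hand, is self-contained and would survive in settings where the swapped system might not obviously satisfy the full hypotheses needed for the resolvent proposition.
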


\begin{proof}
It suffices to show that if 
\begin{equation}   \label{eq:fcnlcont}
\dom(T)\to \C: u\mapsto (Tu,v)
\end{equation}
is $L_2$ continuous, then $v\in \dom(D_2)\cap \dom(D_1B_1^*)$.
The splitting for $v$ analogous to Proposition~\ref{prop:split} for $u$, is
$$
  v= v_0+v_1+v_2\in ((\mH_0\oplus \mH_1)\cap (B_1^*)^{-1}(\mH_0\oplus \mH_2))\oplus B_2^*\mH_2\oplus \mH_1.
$$
We need to show $v_1\in \dom(D_2)$ and $v_1+v_2\in \dom(D_1B_1^*)$.
To this end, let $u=u_2\in \mH_2\cap\dom(D_2B_2)\subset \dom(T)$ in \eqref{eq:fcnlcont}.
Then
$$
  |(D_2B_2 u_2, v_1)|= |(Tu_2, v)|\lesssim \|u_2\|_2.
$$
It follows that $v_1\in \dom(D_2)$, since $(D_2B_2)^*= B_2^*D_2$.
Therefore, for general $u\in\dom(T)$, we have
$$
  (Tu,v)= (B_1D_1u_1, v)+ (u,B_2^*D_2 v_1),
$$
so $|(B_1 D_1 u_1, v_1+v_2)|\lesssim \|u\|_2$.
Since $(B_1D_1)^*= D_1B_1^*$, it follows that $v_1+v_2\in\dom(D_1B_1^*)$.
\end{proof}

We end this section with a short discussion of the definition of the functional calculus of bisectorial operators. 
For further details see \cite{ADMc}, where the corresponding theory for sectorial operators is readily adapted to bisectorial operators.

Given a bisectorial operator $T$ in a Hilbert space $\mH$, that is a closed and densely defined operator $T$ with $\sigma(T)\subset S_\omega$ for some $\omega<\pi/2$ and resolvent bounds 
$$
  \|(\lambda I-T)^{-1}\|\lesssim 1/\dist(\lambda, S_\omega),
$$
there is a natural definition of $\phi(T)$ for any rational function $\phi(\lambda)$ which is bounded and without poles in $S_\omega$. Useful such symbols in this paper are for example $1/(1+t^2\lambda^2)$ and $t\lambda/(1+t^2\lambda^2)$, with scale parameter $t>0$.

If $T$ is not injective, then there is a topological splitting $\mH= \mH_0\oplus \mH_1$,
with $\mH_0= \nul(T)$ and $\mH_1= \clos{\ran(T)}$.
Indeed, 
$$
   I = (I+itT)^{-1}+ itT(I+itT)^{-1},
$$
where the two terms converge strongly to the projections onto $\mH_0$ and $\mH_1$ respectively as $t\to\infty$.
Thus $T= 0\oplus T_1$ and $(\lambda I- T)^{-1}= \lambda^{-1}I \oplus (\lambda I-T_1)^{-1}$, where $T_1:= T|_{\mH_1}$ is an injective bisectorial operator in $\mH_1$.

With the Dunford integral 
$$
  \psi(T):=\psi(0)I_{\mH_0}\oplus \frac 1{2\pi i}\int_\gamma \psi(\lambda)(\lambda I_{\mH_1}- T_1)^{-1} d\lambda,
$$
the functional calculus is extended to all symbols $\psi: S_\mu^o\cup\{0\}\to\C$ which are holomorphic 
on an open bisector $S_\mu^o\supset S_\omega\setminus\{0\}$, with estimates $|\psi(\lambda)|\lesssim \min(|\lambda|^s, |\lambda|^{-s})$ for some $s>0$, so that the integral is convergent in the operator norm on $\mH_1$. Here the curve $\gamma= \sett{te^{\pm i\theta}}{t\in \R}$, $\omega<\theta<\mu$, is oriented counter clockwise around $S_\omega$.

To obtain $\phi(T)$ as bounded operators on $\mH$ for general bounded holomorphic symbols $\phi$, without decay at $0$ and $\infty$, 
square function estimates
$$
  \int_0^\infty \|\psi(tT)f\|_{\mH_1}^2\frac {dt}t\approx \|f\|_{\mH_1}^2,\qquad f\in \mH_1,
$$
are required. Usually, it suffices to show estimates $\lesssim$, as estimates $\gtrsim$ for $T$ follow from
estimates $\lesssim$ for $T^*$. Another basic result concerning square function estimates, is that
if such hold for one symbol $\psi$, then they hold for all $\tilde \psi$ such that $|\tilde\psi(\lambda)|\lesssim \min(|\lambda|^s, |\lambda|^{-s})$ for some $s>0$, and $\tilde \psi|_{S_\omega\pm}\ne 0$.

Given such square function estimates, it follows that
$\psi_n(T)$ are uniformly bounded and converges strongly in $\mL(\mH)$ whenever $\sup_{n\in\Z_+, \lambda\in S_\mu^o}|\psi_n(\lambda)|<\infty$ and $\psi_n(\lambda)$ converges for each $\lambda\in S_\mu^o\cup\{0\}$.
Through such a limiting argument, we construct a bounded homomorphism
$$
  \phi\mapsto \phi(T),
$$
taking bounded symbols $\phi: S_\mu^o\cup\{0\}\to \C$ which are holomorphic on $S_\mu^o$ to bounded linear operators $\phi(T)$ on $\mH$.

\section{Applications to exterior differential systems}   \label{sec:diffforms}

In this section, we show how operators of the form $B_1D_1+D_2B_2$ appear in connection with 
exterior differential systems for differential form.
We first fix notation.
Instead of writing $\{dx_0, dx_1,\ldots, dx_n\}$ for the basis one-forms, we shall keep the notation
$\{e_0,e_1,\ldots, e_n\}$ from Section~\ref{sec:intro} for the basis vectors, and we use the terminology $k$-vector field instead of $k$-form, in the euclidean space $\R^{1+n}$.

The space of $k$-vectors in $\R^{1+n}$ we define to be the $\binom {1+n}{k}$ dimensional complex linear space
$$
  \wedge^k \R^{1+n}:= \text{span}_\C\sett{e_{s_1}\wedg\ldots \wedg e_{s_k}}{0\le s_1<s_2<\ldots s_k\le n},
  \qquad k=2,\ldots, n+1,
$$
and we let $\wedge^0\R^{1+n}:= \C$, $\wedge^1\R^{1+n}:=\C^{1+n}$ and  $\wedge^k \R^{1+n}:= \{0\}$ if $k\notin \{0,1,\ldots, n+1\}$.

Given a vector $v= \sum_{j=0}^n v_je_j\in \wedge^1\R^{1+n}$ and a $k$-vector
$w=\sum_{0\le s_1<\ldots s_k\le n} w_s e_s\in \wedge^k \R^{1+n}$, writing $s= \{s_1, \ldots, s_k\}$ and $e_s:= e_{s_1}\wedg \ldots\wedg e_{s_k}$, we have in particular the exterior product
$v\wedg w\in \wedge^{k+1}\R^{1+n}$ and the (left) interior product $v\lctr w\in \wedge^{k-1}\R^{1+n}$
defined bilinearly using 
$$
  e_j\wedg e_s:= \begin{cases} \epsilon(j,s) e_{\{j\}\cup s}, & j\notin s, \\
  0, & j\in s, \end{cases}\qquad
  e_j\lctr e_s:= \begin{cases} 0, & j\notin s, \\
  \epsilon(j,s\setminus\{j\}) e_{s\setminus \{j\}}, & j\in s,
  \end{cases}
$$
where the permutation sign is $\epsilon(j,s):= (-1)^{|\sett{s_i}{j>s_i}|}$.
Defining inner products on $\wedge^k\R^{1+n}$, $k=0,1,\ldots, n+1$, so that the standard bases above are ON-bases,
we have that
$$
  v\wedg(\cdot): \wedge^k\R^{1+n}\to \wedge^{k+1}\R^{1+n}\qquad \text{and}\qquad
  v\lctr(\cdot): \wedge^{k+1}\R^{1+n}\to \wedge^{k}\R^{1+n},
$$
are adjoint multiplication operators if $v$ is a vector, that is $v\in \wedge^1\R^{1+n}$, with real coefficients.
The corresponding differential operators are the exterior derivative operator
$$
  \nabla_{t,x}\wedg f:= \sum_{j=0}^n e_j\wedg \pd_j f,
$$
mapping $k$-vector fields $f:\R^{1+n}\to \wedge^k\R^{1+n}$
to $k+1$-vector fields, and the interior derivative operator
$$
  \nabla_{t,x}\lctr g:= \sum_{j=0}^n e_j\lctr \pd_j g,
$$
mapping $k+1$-vector fields $g:\R^{1+n}\to \wedge^{k+1}\R^{1+n}$
to $k$-vector fields.
As special cases of these operators, we have the gradient and curl, being the exterior derivative acting on scalars and vectors ($k=0$ and $k=1$ respectively), and the divergence being the interior derivative acting on vectors.
We also note the duality $\iint(\nabla_{t,x}\wedg f, g)dtdx= -\iint (f,\nabla_{t,x}\lctr g)dtdx$ for compactly supported fields.

The basic exterior differential system in $\R^{1+n}$ that we want to consider is
\begin{equation}   \label{eq:extssytemunpert}
\begin{cases}
  \nabla_{t,x}\lctr \tilde f_{k+1}= \nabla_{t,x}\wedg \tilde f_{k-1}, \\
  \nabla_{t,x} \wedg \tilde f_{k+1}= 0=  \nabla_{t,x} \lctr \tilde f_{k-1},
\end{cases}
\end{equation}
for a $k+1$-vector field $\tilde f_{k+1}$ and a $k-1$-vector field $\tilde f_{k-1}$.
Two important special cases are the following.
If $k=0$, then the system reads $\divv_{t,x}\tilde f_1=0= \curl_{t,x} \tilde f_1$, since $\tilde f_{-1}=0$. 
This is nothing but the Laplace equation, written for the gradient as in Section~\ref{sec:intro}.
If $k=1$, then the system reads $\nabla_{t,x}\lctr \tilde f_2=\nabla_{t,x} \tilde f_0$, $\nabla_{t,x}\wedg f_2=0$.
This equation is the Stokes' system of linearized hydrostatics, written for the vorticity $\tilde f_2$ and the the pressure $\tilde f_0$.

Consider next a bilipschitz map $\rho:\R^{1+n}_+\to \Omega\subset \R^{1+n}$, and
the system \eqref{eq:extssytemunpert} in $\Omega$.
We want to pull back this system of equations to $\R^{1+n}_+$, and recall therefore the following 
facts from differential geometry.
At a fixed point in $\R^{1+n}_+$, denote by $\linj\rho$ the Jacobian matrix of all partial derivatives of $\rho$. Extend this linear map as a $\wedg$-homomorphism to $\wedge^k\R^{1+n}$, letting
$$
  \linj \rho(e_{s_1}\wedg\ldots\wedg e_{s_k}):= (\linj \rho e_{s_1})\wedg\ldots\wedg(\linj\rho  e_{s_k}).
$$
Given a $k$-vector field $f:\Omega\to \wedge^k\R^{1+n}$, we define the pullback of $f$ by $\rho$ to be the 
$k$-vector field
$$
  \rho^* f(t,x):= \linj\rho_{(t,x)}^* (f(\rho(t,x))) 
$$
in $\R^{1+n}_+$, where $\linj\rho_{(t,x)}^*$ is the adjoint of the Jacobian matrix at $(t,x)$.
A fundamental well known result is that
\begin{equation}   \label{eq:commext}
  \nabla_{t,x}\wedg (\rho^* f)= \rho^*(\nabla_{t,x}\wedg f).
\end{equation}
Less commonly used is the equivalent dual result that
\begin{equation}    \label{eq:commint}
  \nabla_{t,x}\lctr (J_\rho \rho^{-1}_* g)= J_\rho \rho^{-1}_*(\nabla_{t,x}\lctr g),
\end{equation}
where $J_\rho$ is the Jacobian determinant of $\rho$ and 
$$
  \rho_*^{-1} g(t,x):= \linj\rho_{(t,x)} (g(\rho(t,x)))
$$
is the push forward of $g$ by $\rho^{-1}$.
Applying \eqref{eq:commext} and \eqref{eq:commint}, we find that 
\eqref{eq:extssytemunpert} in $\Omega$ is equivalent to 
\begin{equation}  \label{eq:extssytem}
\begin{cases}
  \nabla_{t,x}\lctr (A_{k+1}(t,x) f_{k+1})= A_k(t,x) (\nabla_{t,x}\wedg f_{k-1}), \\
  \nabla_{t,x} \wedg f_{k+1}= 0=  \nabla_{t,x} \lctr (A_{k-1}(t,x) f_{k-1})
\end{cases}
\end{equation}
in $\R^{1+n}_+$, where $f_j:= \rho^*\tilde f_j$ and $A_j:= J_\rho(\rho^*\rho_*)^{-1}$ is the Jacobian determinant times the
inverse of the metric tensor $G= \rho^*\rho_*$, extended as a $\wedg$-homomorphism to $\wedge^j\R^{1+n}$.

We now show, analogous to the case $k=0$ in the introduction, how \eqref{eq:extssytem} is equivalent to a vector valued ordinary differential equation $\pd_t f_t+ Tf_t=0$,
for general bounded measurable and accretive coefficients $A_j(t,x)\in \mL(\wedge^j\R^{1+n})$, $j=k-1, k, k+1$,
with an infinitesimal generator $T$ of the form $T= B_1D_1+D_2B_2$. 
We use the natural identifications
\begin{align*}
   \wedge^k\R^n\oplus \wedge^{k+1}\R^n=\wedge^{k+1}\R^{1+n} &: \lf_k\oplus \lf_{k+1}\approx e_0\wedg \lf_k+ \lf_{k+1}= f_{k+1}, \\
   \wedge^{k-2}\R^n\oplus \wedge^{k-1}\R^n=\wedge^{k-1}\R^{1+n} &: \lf_{k-2}\oplus \lf_{k-1}\approx e_0\wedg \lf_{k-2}+ \lf_{k-1}= f_{k-1},
\end{align*}
with corresponding splittings of the coefficient matrices so that
\begin{align*}
  A_{k+1}f_{k+1} &= e_0\wedg( a_{k+1}\lf_k + b_{k+1}\lf_{k+1})+ ( c_{k+1}\lf_k + d_{k+1}\lf_{k+1}), \\
  A_{k-1}f_{k-1} &= e_0\wedg( a_{k-1}\lf_{k-2} + b_{k-1}\lf_{k-1})+ ( c_{k-1}\lf_{k-2} + d_{k-1}\lf_{k-1}), 
\end{align*}
and similarly for $A_k$.

Let $\mH_{\wedg}^j$ denote the closure of the range of $\nabla_x\wedg(\cdot): L_2(\R^n; \wedge^{j-1}\R^n)\to L_2(\R^n; \wedge^{j}\R^n)$, and let $ \mH_{\lctr}^j$ be the closure of the range of $\nabla_x\lctr(\cdot): L_2(\R^n; \wedge^{j+1}\R^n)\to L_2(\R^n; \wedge^{j}\R^n)$. 
Fundamental results are that $\mH_{\wedg}^j$ is the null space of $\nabla_x\wedg(\cdot): L_2(\R^n; \wedge^{j}\R^n)\to L_2(\R^n; \wedge^{j+1}\R^n)$, $\mH_{\lctr}^j$ is the nullspace of $\nabla_x\lctr(\cdot): L_2(\R^n; \wedge^{j}\R^n)\to L_2(\R^n; \wedge^{j-1}\R^n)$, and we have an orthogonal Hodge splitting
$$
  L_2(\R^n;\wedge^j\R^n)= \mH_{\wedg}^j\oplus \mH_{\lctr}^j.
$$
\begin{prop}   \label{prop:extsyst}
Assume that $A_j\in L_\infty(\R^n; \mL(\wedge^{j-1}\R^n\oplus \wedge^j\R^n))$ are $t$-independent
and accretive on $\mH_{\lctr}^{j-1}\oplus \mH_{\wedg}^j$, $j=k-1, k, k+1$. 
  Define the $\wedge^{k-2}\R^n\oplus \wedge^{k-1}\R^n\oplus \wedge^{k}\R^n\oplus \wedge^{k+1}\R^n$ valued function $\hf$ by
$$
  \hf =
  \begin{bmatrix}
      \hf_{k-2} \\ \hf_{k-1} \\  \hf_{k} \\ \hf_{k+1} 
  \end{bmatrix}  
   :=
  \begin{bmatrix}
     a_{k-1}\lf_{k-2}+ b_{k-1} \lf_{k-1} \\ \lf_{k-1} \\  a_{k+1}\lf_{k}+ b_{k+1} \lf_{k+1} \\ \lf_{k+1} 
  \end{bmatrix}.
$$
Then the exterior differential system \eqref{eq:extssytem} for the $\wedge^{k-1}\R^{1+n}\oplus \wedge^{k+1}\R^{1+n}$ valued function $f_{k-1}\oplus f_{k+1}$ is equivalent to the 
vector valued ordinary differential equation
$$
  \pd_t \hf + (B_1D_1+D_2B_2)\hf=0,
$$
together with the constraint $\hf\in \clos{\ran(B_1D_1+D_2B_2)}$ for each $t$,
where 
\begin{align*}
B_1&:= \begin{bmatrix}
  0 & 0 & 0 & 0 \\
  0 & a_k^{-1} & -a_k^{-1}b_k & 0 \\
  0 & c_k a_k^{-1} & d_k-c_ka_k^{-1}b_k & 0 \\
  0 & 0 & 0 & 0
\end{bmatrix}, \\
D_1&:=
\begin{bmatrix}
  0 & 0 & 0 & 0 \\
  0 & 0 & \nabla_x\lctr(\cdot) & 0 \\
  0 & -\nabla_x\wedg(\cdot) & 0 & 0  \\
  0 & 0 & 0 & 0 
\end{bmatrix}, \\
D_2&:=
\begin{bmatrix}
  0 & \nabla_x\lctr(\cdot) & 0 & 0 \\
  -\nabla_x\wedg(\cdot) & 0 & 0 & 0 \\
  0 & 0 &   0 & \nabla_x\lctr(\cdot)  \\
  0 & 0 & -\nabla_x\wedg(\cdot) & 0 \\
\end{bmatrix} \quad\text{and}\\
B_2&:=
\begin{bmatrix}
  a_{k-1}^{-1} & -a_{k-1}^{-1}b_{k-1} & 0 & 0 \\
  c_{k-1} a_{k-1}^{-1} & d_{k-1}-c_{k-1}a_{k-1}^{-1}b_{k-1} & 0 & 0 \\
  0 & 0 & a_{k+1}^{-1} & -a_{k+1}^{-1}b_{k+1} \\
  0 & 0 & c_{k+1} a_{k+1}^{-1} & d_{k+1}-c_{k+1}a_{k+1}^{-1}b_{k+1}  
\end{bmatrix}
\end{align*}
satisfy the hypothesis in Theorem~\ref{thm:main}.
\end{prop}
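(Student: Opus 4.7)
The plan is to split the proof into two parts: (i) verifying that the operators $D_1,D_2,B_1,B_2$ defined in the statement satisfy the standing hypotheses of Theorem~\ref{thm:main}, and (ii) showing that the exterior differential system \eqref{eq:extssytem} is equivalent to the stated ODE $\pd_t\hf+(B_1D_1+D_2B_2)\hf=0$ together with the range constraint $\hf\in\clos{\ran(B_1D_1+D_2B_2)}$.

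Part (i) is largely mechanical. Self-adjointness of $D_1$ and $D_2$ reduces to the duality $\iint(\nabla_x\wedg f,g)\,dx=-\iint(f,\nabla_x\lctr g)\,dx$ applied to the individual $2\times2$ off-diagonal blocks. The inclusion $\ran(D_2)\subset\nul(D_1)$ follows from $\nabla_x\wedg\circ\nabla_x\wedg=0=\nabla_x\lctr\circ\nabla_x\lctr$, since each entry of the product $D_1D_2$ only ever composes a wedge with a wedge or an interior product with an interior product. Partial ellipticity $\|D_if\|_2\gtrsim\|f\|_{\dot H^1}$ for $f\in\ran(D_i)$ follows by Plancherel from $(\nabla_x\wedg+\nabla_x\lctr)^2=-\Delta$ together with the orthogonality of the Hodge splitting $L_2(\R^n;\wedge^j\R^n)=\mH_\wedg^j\oplus\mH_\lctr^j$. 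Partial accretivity of $B_1$ on $\ran(D_1)$ is the classical bow-tie equivalence: whenever $A_k(q_1,p_2)=(p_1,q_2)$, one has $\re(\hat A_k(p_1,p_2),(p_1,p_2))=\re(A_k(q_1,p_2),(q_1,p_2))$, so the accretivity of $A_k$ on $\mH_\lctr^{k-1}\oplus\mH_\wedg^k$ transfers to the middle block of $B_1$ on the same subspace; the analogous statement for $B_2$, applied separately to the $A_{k-1}$ and $A_{k+1}$ blocks, is identical.

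For part (ii) I would split each of the three equations of \eqref{eq:extssytem} into its $e_0$-part (the piece carrying an $e_0\wedg$ prefix) and its tangential part (the remainder in $\wedge^*\R^n$) using
\begin{align*}
\nabla_{t,x}\wedg(e_0\wedg\bar g+\tilde g) &= \nabla_x\wedg\tilde g+e_0\wedg(\pd_t\tilde g-\nabla_x\wedg\bar g),\\
\nabla_{t,x}\lctr(e_0\wedg\bar g+\tilde g) &= (\pd_t\bar g+\nabla_x\lctr\tilde g)-e_0\wedg(\nabla_x\lctr\bar g),
\end{align*}
where the second identity rests on the multivector rule $v\lctr(e_0\wedg\bar g)=-e_0\wedg(v\lctr\bar g)$ for tangential $v$. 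Writing $A_{k+1}f_{k+1}=e_0\wedg\hf_k+(\text{tangential})$ and $A_{k-1}f_{k-1}=e_0\wedg\hf_{k-2}+(\text{tangential})$, and inverting to obtain $\lf_{k-2}=a_{k-1}^{-1}\hf_{k-2}-a_{k-1}^{-1}b_{k-1}\hf_{k-1}$ and $\lf_k=a_{k+1}^{-1}\hf_k-a_{k+1}^{-1}b_{k+1}\hf_{k+1}$, the four $\pd_t$-bearing equations unfold row by row: Row~4 is the $e_0$-part of $\nabla_{t,x}\wedg f_{k+1}=0$; Row~1 is the tangential part of $\nabla_{t,x}\lctr(A_{k-1}f_{k-1})=0$; Row~2 comes from the $e_0$-part of the main equation $\nabla_{t,x}\lctr(A_{k+1}f_{k+1})=A_k(\nabla_{t,x}\wedg f_{k-1})$ after solving for $a_k\pd_t\lf_{k-1}$; and Row~3 comes from its tangential part after using Row~2 to eliminate $\pd_t\lf_{k-1}-\nabla_x\wedg\lf_{k-2}$. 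A direct computation of $B_1D_1\hf$ and $D_2B_2\hf$ from the block forms in the statement then confirms the four matches.

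The two remaining static equations are the tangential part $\nabla_x\wedg\lf_{k+1}=0$ of $\nabla_{t,x}\wedg f_{k+1}=0$ and the $e_0$-part $\nabla_x\lctr\hf_{k-2}=0$ of $\nabla_{t,x}\lctr(A_{k-1}f_{k-1})=0$, which read precisely $\hf_{k+1}\in\mH_\wedg^{k+1}$ and $\hf_{k-2}\in\mH_\lctr^{k-2}$. To identify these with the constraint $\hf\in\clos{\ran T}$, I would compute $\nul(T^*)$ via Proposition~\ref{prop:duality}: if $v\in\nul(T^*)$ then $v_1\in\mH_\wedg^{k-2}$ and $v_4\in\mH_\lctr^{k+1}$, while the middle components must satisfy $(v_2,v_3)\in\mH_\lctr^{k-1}\oplus\mH_\wedg^k$ and $B_1^*(v_2,v_3)\in\mH_\wedg^{k-1}\oplus\mH_\lctr^k$; the two target subspaces being orthogonal in the Hodge decomposition, accretivity of $B_1^*$ forces $\re(B_1^*(v_2,v_3),(v_2,v_3))=0$ and hence $(v_2,v_3)=0$. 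Taking orthogonal complements then gives $\clos{\ran T}=\mH_\lctr^{k-2}\oplus L_2(\R^n;\wedge^{k-1}\R^n)\oplus L_2(\R^n;\wedge^k\R^n)\oplus\mH_\wedg^{k+1}$, matching the two static conditions exactly. The main obstacle is the algebraic bookkeeping in Row~3, which only closes after Row~2 is fed back in: this is the single place where the entries of $A_k$ couple the $e_0$ and tangential halves in a non-trivial way.
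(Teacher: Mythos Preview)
Your proposal is correct and, for the core computation, follows exactly the paper's path: the same normal/tangential splitting of the three equations in \eqref{eq:extssytem}, the same four evolution equations plus two static constraints, the same inversion $\lf_{k-2}=a_{k-1}^{-1}(\hf_{k-2}-b_{k-1}\hf_{k-1})$, $\lf_k=a_{k+1}^{-1}(\hf_k-b_{k+1}\hf_{k+1})$, and the same substitution of Row~2 into Row~3 to produce the $c_ka_k^{-1}$ coupling in $B_1D_1$. Your explicit verification of the hypotheses of Theorem~\ref{thm:main} in part~(i) is a welcome addition that the paper leaves to the reader.

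The one genuine difference is how you identify the range constraint. The paper uses Proposition~\ref{prop:nulrang} to write $\clos{\ran T}=B_1\clos{\ran(D_1)}\oplus\clos{\ran(D_2)}$ directly, then observes that $B_1(\mH_\lctr^{k-1}\oplus\mH_\wedg^k)$ together with $\mH_\wedg^{k-1}\oplus\mH_\lctr^k$ fills out all of $L_2(\R^n;\wedge^{k-1}\R^n\oplus\wedge^k\R^n)$ by a Hodge splitting adapted to $B_1$. You instead compute $\nul(T^*)$ via Proposition~\ref{prop:duality} and use accretivity of $B_1^*$ on $\clos{\ran(D_1)}$ to kill the middle components, then take orthogonal complements. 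Both arguments are short and correct; the paper's is slightly more structural (it never invokes accretivity at this step), while yours has the mild advantage of not needing the non-orthogonal splitting of Proposition~\ref{prop:split} in the background.
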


\begin{proof}
  The equation $\nabla_{t,x} \wedg f_{k+1}= 0$ is equivalent to
$$
  \begin{cases}
    \pd_t \lf_{k+1} - \nabla_x\wedg \lf_k=0, \\
    \nabla_x\wedg \lf_{k+1}=0.
  \end{cases}
$$
  The equation $\nabla_{t,x} \lctr (A_{k-1} f_{k-1})= 0$ is equivalent to
$$
  \begin{cases}
    \nabla_x\lctr (a_{k-1} \lf_{k-2}+ b_{k-1}\lf_{k-1})=0, \\
    \pd_t ( a_{k-1} \lf_{k-2}+ b_{k-1} \lf_{k-1}) + \nabla_x\lctr(c_{k-1} \lf_{k-2}+ d_{k-1} \lf_{k-1})=0.
  \end{cases}
$$
  The equation $\nabla_{t,x} \lctr (A_{k+1} f_{k+1})= A_k(\nabla_{t,x}\wedg f_{k-1})$ is equivalent to
$$
  \begin{cases}
    -\nabla_x\lctr (a_{k+1} \lf_{k}+ b_{k+1}\lf_{k+1})= a_k(\pd_t \lf_{k-1}-\nabla_x\wedg \lf_{k-2})+ b_k(\nabla_x\wedg \lf_{k-1}), \\
    \pd_t ( a_{k+1} \lf_{k}+ b_{k+1} \lf_{k+1}) + \nabla_x\lctr(c_{k+1} \lf_{k}+ d_{k+1} \lf_{k+1}) 
    \\ \qquad\qquad=
    c_k(\pd_t \lf_{k-1}-\nabla_x\wedg \lf_{k-2})+ d_k(\nabla_x\wedg \lf_{k-1}).
  \end{cases}
$$
Written in terms of $\hf$, the four evolution equations are
$$
  \begin{cases}
    \pd_t  \hf_{k-2} + \nabla_x\lctr(c_{k-1} \lf_{k-2}+ d_{k-1} \lf_{k-1})=0, \\
    \pd_t \hf_{k-1}-\nabla_x\wedg \lf_{k-2}+ a_k^{-1}b_k(\nabla_x\wedg \lf_{k-1}) +
    a_k^{-1}\nabla_x\lctr (a_{k+1} \lf_{k}+ b_{k+1}\lf_{k+1})=0, \\
    \pd_t  \hf_{k} + \nabla_x\lctr(c_{k+1} \lf_{k}+ d_{k+1} \lf_{k+1}) 
    -
    c_k(\pd_t \hf_{k-1}-\nabla_x\wedg \lf_{k-2})- d_k(\nabla_x\wedg \lf_{k-1})=0, \\
    \pd_t \hf_{k+1} - \nabla_x\wedg \lf_k=0, 
  \end{cases}
$$
and the remaining two equations give the constraints $\nabla_x\wedg \hf_{k+1}=0= \nabla_x\lctr \hf_{k-2}$.
We next write the evolution equations in terms of $\hf$, using 
$\lf_{k-2}= a_{k-1}^{-1}(\hf_{k-2}- b_{k-1}\hf_{k-1})$ and
$\lf_{k}= a_{k+1}^{-1}(\hf_{k}- b_{k+1}\hf_{k+1})$.
The tangential derivatives in the evolution equations that appear with coefficients to the right are
$$
\begin{bmatrix}
  \nabla_x\lctr (c_{k-1}a_{k-1}^{-1} (\hf_{k-2}-b_{k-1}\hf_{k-1})+ d_{k-1}\hf_{k-1}) \\
  -\nabla_x\wedg( a_{k-1}^{-1}(\hf_{k-2}- b_{k-1}\hf_{k-1})) \\
  \nabla_x\lctr (c_{k+1}a_{k+1}^{-1} (\hf_{k}-b_{k+1}\hf_{k+1})+ d_{k+1}\hf_{k+1}) \\
    -\nabla_x\wedg( a_{k+1}^{-1}(\hf_{k}- b_{k+1}\hf_{k+1})) 
\end{bmatrix}
=D_2B_2\hf.
$$
The tangential derivatives in the evolution equations that appear with coefficients to the left are
$$
\begin{bmatrix}
 0 \\
 a_k^{-1} b_k \nabla_x\wedg \hf_{k-1}+ a_k^{-1} \nabla_x\lctr \hf_k \\
 c_k( a_k^{-1}b_k\nabla_x\wedg\hf_{k-1}+ a_k^{-1}\nabla_x\lctr \hf_k )- d_k\nabla_x\wedg \hf_{k-1} \\
 0  
\end{bmatrix} \\
=B_1D_1\hf.
$$
This shows that the evolution equation for $\hf$ is $\pd_t \hf+ (B_1D_1+D_2B_2)\hf=0$.
To show that the constraint $\nabla_x\wedg \hf_{k+1}=0= \nabla_x\lctr \hf_{k-2}$ is equivalent
to $f\in \clos{\ran(B_1D_1+D_2B_2)}$, we note that 
\begin{multline*}
\clos{\ran(B_1D_1+D_2B_2)}=B_1\clos{\ran(D_1)}\oplus \clos{\ran(D_2)}\\=
B_1(\mH_{\wedg}^k\oplus \mH_{\lctr}^{k-1})\oplus \big((\mH_{\lctr}^k\oplus \mH_{\wedg}^{k-1})\oplus (\mH_{\wedg}^{k+1}\oplus\mH_{\lctr}^{k-2})\big)\\=L_2(\R^n; \wedge^{k-1}\R^n\oplus \wedge^k\R^{n})\oplus (\mH_{\wedg}^{k+1}\oplus\mH_{\lctr}^{k-2}),
\end{multline*}
by Proposition~\ref{prop:nulrang} and a $L_2$ Hodge splitting of $L_2(\R^n; \wedge^{k-1}\R^n\oplus \wedge^k\R^{n})$ adapted to $B_1$.
\end{proof}

Given Theorem~\ref{thm:main} and Proposition~\ref{prop:extsyst}, we can proceed as in \cite[Thm. 2.3]{AAM}, 
where the case $k=0$ was treated, to represent solutions to the exterior differential system  \eqref{eq:extssytem} with functional calculus as outlined in Section~\ref{sec:ops}.
To this end, define symbols
\begin{gather*}
  e^{-t\lambda}\chi^+(\lambda):=
  \begin{cases}
    e^{-t\lambda}, & \re\lambda>0, \\
    0, & \re \lambda\le 0,
  \end{cases}
  \qquad t\ge 0, \\
  e^{-t\lambda}\chi^-(\lambda):=
  \begin{cases}
    0, & \re\lambda\ge 0, \\
    e^{-t\lambda}, & \re \lambda< 0,
  \end{cases} 
  \qquad t\le 0.
\end{gather*}
For $t=0$, we obtain bounded spectral projections $\chi^\pm(T)$, with $\chi^+(T)+ \chi^-(T)$ being the projection 
onto $\clos{\ran(T)}$ along $\nul(T)$.
The following result roughly states that the spectral subspace $\chi^+(T)L_2:= \ran(\chi^+(T))$ is a Hardy type subspace 
containing traces of solutions to \eqref{eq:extssytem} in $\R^{1+n}_+$, whereas 
the spectral subspace $\chi^-(T)L_2:= \ran(\chi^-(T))$ is a Hardy type subspace 
containing traces of solutions to \eqref{eq:extssytem} in $\R^{1+n}_-$,
and the operators $e^{-tT}\chi^\pm(T)$ are Cauchy integral type operators, giving the value of the function at $(t,\cdot)$ from the boundary trace.

\begin{thm}
  Consider the exterior differential system \eqref{eq:extssytem}, with bounded, $t$-independent, accretive 
  coefficients $A_{k-1}, A_k, A_{k+1}$, and the associated operator $T=B_1D_1+D_2B_2$ as in Proposition~\ref{prop:extsyst}. 
  Given $\hf_0^\pm\in \chi^\pm(T)L_2$, the function $f\approx \hf$ defined by
$$
   \hf^\pm(t,x):= (e^{-tT}\chi^\pm(T)) \hf_0^\pm(x),\qquad (t,x)\in\R^{1+n}_\pm,
$$  
  is a solution to \eqref{eq:extssytem}, with limits $\lim_{t\to 0^\pm}\|\hf_t^\pm- \hf_0^\pm\|_2=0$
  and $\lim_{t\to \pm \infty}\|\hf_t^\pm\|_2=0$.
  Conversely, any solution $f^\pm\approx \hf^\pm$ to \eqref{eq:extssytem} in $\R^{1+n}_\pm$ with estimates $\sup_{t>0}\int_{t<\pm s<2t}\|f_s\|_2^2 ds<\infty$ is of this form, and in particular has the stated limits at $t=0$, for some $\hat f^\pm_0\in\chi^\pm(T)L_2$, and $t=\infty$.
  
  These solutions have square function, non-tangential maximal function and $L^t_\infty L^x_2$ estimates
$$
  \int_{\pm t>0}\|\pd_t \hf^\pm_t\|_2^2 tdt\approx \|\tN(\hf^\pm)\|_2^2\approx \sup_{\pm t>0} \|\hf^\pm_t\|_2^2\approx \|\hf^\pm_0\|_2^2.
$$
\end{thm}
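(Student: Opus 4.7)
The plan is to invoke Proposition~\ref{prop:extsyst} to recast the exterior differential system \eqref{eq:extssytem} as the vector valued ODE $\pd_t \hf+ T\hf=0$ under the constraint $\hf\in \clos{\ran(T)}$, and then exploit the $H^\infty$ functional calculus of $T$ from Theorem~\ref{thm:main}. The symbols $e^{-t\lambda}\chi^\pm(\lambda)$, $\pm t\ge 0$, are bounded and holomorphic on $S_\mu^o$ (decaying exponentially on one half-bisector and vanishing on the other), so the construction at the end of Section~\ref{sec:ops} produces uniformly bounded operators $e^{-tT}\chi^\pm(T)$, which form strongly continuous semigroups on the closed spectral subspaces $\chi^\pm(T)L_2$.

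For the forward direction, given $\hf_0^\pm\in\chi^\pm(T)L_2$ I would set $\hf^\pm_t:= e^{-tT}\chi^\pm(T)\hf_0^\pm$. Since $\chi^\pm(T)\hf_0^\pm=\hf_0^\pm$ and $\chi^\pm(T)L_2\subset\clos{\ran(T)}$, the range constraint is automatic, and $\pd_t \hf^\pm=-T\hf^\pm$ follows from the functional calculus identity $\pd_t e^{-t\lambda}=-\lambda e^{-t\lambda}$. Strong continuity of the semigroup at $0$ yields $\|\hf^\pm_t-\hf_0^\pm\|_2\to 0$, while pointwise decay of $e^{-t\lambda}$ on the relevant half-bisector as $\pm t\to\infty$, combined with uniform boundedness and approximation on the dense subspace $\ran(T|_{\chi^\pm(T)L_2})$, gives $\|\hf^\pm_t\|_2\to 0$. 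Reconstructing $f\approx\hf$ via Proposition~\ref{prop:extsyst} then produces the required solution of \eqref{eq:extssytem}.

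The three quantitative equivalences are read off Theorem~\ref{thm:main} with suitable symbols. For the square function estimate I take $\psi(\lambda):=\lambda e^{-\lambda}\chi^+(\lambda)$ (and its $\chi^-$ counterpart), which satisfies $|\psi(\lambda)|\lesssim\min(|\lambda|,|\lambda|^{-N})$ and is nonzero on the positive half-bisector, so from $\psi(tT)\hf_0^+=-t\pd_t\hf^+_t$ one gets
\[
\int_0^\infty \|\pd_t\hf^+_t\|_2^2\,t\,dt=\int_0^\infty \|\psi(tT)\hf_0^+\|_2^2\,\frac{dt}{t}\approx\|\hf_0^+\|_2^2.
\]
For the non-tangential maximal function bound, the symmetrized symbol $\phi(\lambda):=e^{-\lambda}\chi^+(\lambda)+e^\lambda\chi^-(\lambda)$ is bounded holomorphic on $S_\mu^o$, satisfies $|\phi(\lambda)-1|\lesssim|\lambda|$ and $|\phi(\lambda)|\lesssim|\lambda|^{-N}$, and acts on $\chi^+(T)L_2$ as $\phi(tT)\hf_0^+=e^{-tT}\hf_0^+$, so the NTM estimate of Theorem~\ref{thm:main} applies verbatim. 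The $L^t_\infty L^x_2$ equivalence is immediate from uniform boundedness of $e^{-tT}\chi^\pm(T)$ together with the boundary trace.

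The main obstacle is the converse: given a solution $\hf^\pm$ satisfying only the Whitney bound $\sup_{t>0}\int_{t<\pm s<2t}\|\hf_s\|_2^2\,ds<\infty$, one must extract $\hf_0^\pm\in\chi^\pm(T)L_2$ with $\hf^\pm_t=e^{-tT}\chi^\pm(T)\hf_0^\pm$. My plan is to deduce from uniqueness for the ODE (via the resolvent estimates of Proposition~\ref{prop:blockresolvent}) the semigroup identity $\hf^+_t=e^{-(t-s)T}\hf^+_s$ for $0<s<t$, then pass $s\to 0$ along a subsequence where the Whitney average yields weak $L_2$-convergence, producing a candidate $\hf_0^+\in\clos{\ran(T)}$; strong continuity of $e^{-tT}$ on $\clos{\ran(T)}$ upgrades the weak limit to a norm limit and identifies the trace. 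Finally, the $\chi^-(T)$ component of $\hf_0^+$ must vanish, since otherwise $e^{-tT}$ would force exponential growth as $t\to\infty$ incompatible with the Whitney bound, so $\hf_0^+\in\chi^+(T)L_2$. With the trace in hand, the three estimates for an arbitrary solution reduce to the forward direction already established, and the same argument applies in $\R^{1+n}_-$.
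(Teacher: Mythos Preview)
Your proposal is correct and follows essentially the same route as the paper, which itself only sketches the argument: it names the symbols $\psi(\lambda)=\lambda e^{-\lambda}\chi^\pm(\lambda)$ and $\phi(\lambda)=e^{-\lambda}\chi^\pm(\lambda)$, invokes Theorem~\ref{thm:main}, and defers the converse direction and remaining details to \cite[Thm.~3.2]{AAM} and \cite[Thm.~8.2]{AA1}. Your symmetrization of $\phi$ is the right technical fix to meet the hypothesis $|\phi(\lambda)-1|\lesssim|\lambda|^s$ of Theorem~\ref{thm:main}; note that the same device (or the observation that modifying the symbol on $S_{\omega-}^o$ does not change its action on $\chi^+(T)L_2$) is needed for the \emph{lower} square-function bound, since the reverse estimate in Theorem~\ref{thm:main} requires $\psi$ to be nonzero on both half-bisectors.
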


The idea of proof is found in \cite[Thm. 3.2]{AAM} and \cite[Thm. 8.2]{AA1}.
In particular, the estimates follow from Theorem~\ref{thm:main}, using the symbol 
$\psi(\lambda)= \lambda e^{-\lambda}\chi^\pm(\lambda)$ for the square function estimates, and 
the symbol $\phi(\lambda)= e^{-\lambda}\chi^\pm(\lambda)$ for the non-tangential maximal function estimates and the $L^t_\infty L^x_2$ estimates. We omit the details.

\section{Square function estimates}   \label{sec:sqfcn}
In this section, we prove the square function estimates for the operator $T$ in Theorem~\ref{thm:main}.
We start by simplifying the problem with Lemma~\ref{lem:reductsqfcn}, and we use the following operators.
\begin{align*}
  P_t^1 &:= (I+t^2(B_1D_1)^2)^{-1}= \tfrac 12(R_t^1+ R_{-t}^1), \\
  Q_t^1 &:= tB_1D_1(I+t^2(B_1D_1)^2)^{-1}= \tfrac i{2}(R_t^1- R_{-t}^1), \\
  P_t^2 &:= (I+t^2(D_2B_2)^2)^{-1}= \tfrac 12(R_t^2+ R_{-t}^2), \\
  Q_t^2 &:= tD_2B_2(I+t^2(D_2B_2)^2)^{-1}= \tfrac i{2}(R_t^2- R_{-t}^2).
\end{align*}
It is known that these operators are uniformly bounded for $t>0$ and that square function estimates
$$
  \int_0^\infty \|Q_t^1 f\|_2^2 \frac {dt}t + \int_0^\infty \|Q_t^2 f\|_2^2 \frac {dt}t \lesssim \|f\|_2^2, \qquad
  f\in L_2(\R^n;\C^N),
$$
hold. See for example \cite[Thm. 3.4]{AAM}.

\begin{lem}   \label{lem:reductsqfcn}
Let
$$
   \Theta_t:= Q_t^2 R_t^1 B_1.
$$
Assume that we have square function estimates
$$
  \int_0^\infty \|\Theta_t f\|_2^2 \frac {dt}t \lesssim \|f\|_2^2,\qquad \text{for all } f\in \mH_1.
$$
Then we have square function estimates
$\int_0^\infty \|\psi(tT)f\|_2^2 \frac {dt}t \lesssim \|f\|_2^2$, $f\in L_2(\R^n;\C^N)$,
for $\psi$ as in Theorem~\ref{thm:main}.
\end{lem}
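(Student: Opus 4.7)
The plan is to reduce first to the single model symbol $\psi_0(\lambda)=\lambda/(1+\lambda^2)$, for which $\psi_0(tT)=Q_t:=tT(I+t^2T^2)^{-1}=\tfrac{i}{2}\bigl((I+itT)^{-1}-(I-itT)^{-1}\bigr)$. Once the estimate
\[
\int_0^\infty \|Q_t f\|_2^2\,\frac{dt}{t}\lesssim \|f\|_2^2, \qquad f\in L_2(\R^n;\C^N),
\]
is established, the extension to every $\psi$ admissible in Theorem~\ref{thm:main} follows from the general principle recalled at the end of Section~\ref{sec:ops} that square function estimates for one non-trivial symbol transfer to every symbol satisfying the decay and non-degeneracy hypotheses.

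To produce this model estimate, insert the block formula from Proposition~\ref{prop:blockresolvent} for $(I\pm itT)^{-1}$ in the topological splitting $L_2=\mH_0^{B_2}\oplus B_1\mH_1\oplus \mH_2$. The operator $Q_t$ then becomes lower triangular with diagonal blocks $0$, $Q_t^1|_{B_1\mH_1}$, $Q_t^2|_{\mH_2}$, and a single off-diagonal entry
\[
Q_t^{\off}:=\tfrac{i}{2}\bigl[(R_t^2-I)R_t^1-(R_{-t}^2-I)R_{-t}^1\bigr]\colon B_1\mH_1\to \mH_2.
\]
Writing $f=f_0+f_1+f_2$ in this splitting and using that it is topological, one has
$\|Q_t f\|_2^2\lesssim \|Q_t^1 f_1\|_2^2+\|Q_t^{\off}f_1\|_2^2+\|Q_t^2 f_2\|_2^2$,
so after integration in $t$ the two diagonal contributions are absorbed by the known square function estimates for $Q_t^1$ and $Q_t^2$ on $L_2$ recalled at the start of this section, leaving only the off-diagonal one.

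The heart of the argument is this off-diagonal block. Write $f_1=B_1 g$ with $g\in\mH_1$, which is permissible since $B_1\colon \mH_1\to B_1\mH_1$ is an isomorphism by accretivity, so that $\|g\|_2\approx \|f_1\|_2$. A direct algebraic computation using the relations $R^i_{\pm t}=P^i_t\mp iQ^i_t$, $(R_t^2-I)=-itD_2B_2 R_t^2$, $tD_2B_2 P_t^2=Q_t^2$ and $tD_2B_2 Q_t^2=I-P_t^2$ rearranges $Q_t^{\off}B_1 g$ into the clean form
\[
Q_t^{\off}B_1 g=\Theta_t g+(R_{-t}^2-I)Q_t^1 B_1 g.
\]
The first term has the desired square function estimate by the hypothesis on $\Theta_t$, and for the second the uniform boundedness of $R_{-t}^2-I$ and the standard square function estimate for $Q_t^1$ on $L_2$ yield
\[
\int_0^\infty \|(R_{-t}^2-I)Q_t^1 B_1 g\|_2^2\,\frac{dt}{t}\lesssim \int_0^\infty \|Q_t^1(B_1 g)\|_2^2\,\frac{dt}{t}\lesssim \|B_1 g\|_2^2\lesssim \|g\|_2^2.
\]

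I expect the main obstacle to be precisely the algebraic identity isolating $\Theta_t$ inside $Q_t^{\off}B_1$; once that identity is in hand, every remaining step is routine bookkeeping combining the block decomposition with the classical square function estimates for $Q_t^1$ and $Q_t^2$. Summing the three block contributions yields the model estimate $\int_0^\infty\|Q_t f\|_2^2\,dt/t\lesssim \|f\|_2^2$, which completes the reduction.
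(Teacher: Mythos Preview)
Your proof is correct and follows essentially the same route as the paper: reduce to the model symbol $\psi_0(\lambda)=\lambda/(1+\lambda^2)$, use the block expression from Proposition~\ref{prop:blockresolvent} to write $\psi_0(tT)$ as a lower triangular operator, and verify the algebraic identity expressing the off-diagonal block on $B_1\mH_1$ as $(R_{-t}^2-I)Q_t^1+Q_t^2 R_t^1$, so that on $f_1=B_1g$ it becomes $\Theta_t g+(R_{-t}^2-I)Q_t^1 B_1 g$. The paper obtains this identity by the same partial-fraction rearrangement you indicate, and then finishes exactly as you do using uniform boundedness of $R_{-t}^2$ together with the known square function estimates for $Q_t^1$ and $Q_t^2$.
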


\begin{proof}
  It is known, see \cite{ADMc}, that it suffices to prove the square function estimate for $\psi(\lambda)= \lambda/(1+\lambda^2)$. For this $\psi$, we see from Proposition~\ref{prop:blockresolvent} that
$$
  \psi(tT)= \tfrac i2\big((I+itT)^{-1}-(I-itT)^{-1}\big)=
    \begin{bmatrix} 0 & 0 & 0 \\ 0 & Q^1_t & 0 \\ 0 & (R_{-t}^2-I)Q_t^1+ Q_t^2 R_t^1 & Q_t^2 \end{bmatrix}, \qquad t>0,
$$
by writing
\begin{multline*}
 \frac 12\left( \frac{tD_2B_2}{I+itD_2B_2}\frac{1}{I+itB_1D_1} + \frac{tD_2B_2}{I-itD_2B_2}\frac{1}{I-itB_1D_1} \right)
  \\ =
  \frac{itD_2B_2}{I-itD_2B_2}\frac{t B_1D_1}{I+t^2(B_1D_1)^2} + \frac{tD_2B_2}{I+t^2(D_2B_2)^2}\frac{1}{I+itB_1D_1}.
\end{multline*}
Since $R_{-t}^2$ are uniformly bounded and since we have square function estimates for $Q_t^1$ and $Q_t^2$, it suffices to prove square function estimates for $Q_t^2R_t^1$ on $B_1\mH_1$ as claimed.
\end{proof}

We now prove the square function estimates for $\Theta_t$ using techniques from the proof of the 
Kato square root estimate, following \cite[Sec. 4]{AAH}.

\begin{defn}
Let $\mD= \bigcup_{j\in \Z}\mD_{2^{-j}}$ denote the dyadic cubes in $\R^n$, with
$$
  \mD_t:= \sett{2^{-j}(0,1)^n+2^{-j}k}{k\in \Z^n},  \qquad 2^{-j-1}<t\le 2^{-j}, j\in\Z.
$$
Write $\ell(Q)$ for side length and $|Q|$ for measure of a cube $Q$.
Given $Q\in \mD$, write
$$
  A_0(Q):= Q, \qquad A_k(Q):= (2^k Q)\setminus (2^{k-1} Q), \quad k\ge 1,
$$
for the dyadic annuli around $Q$, where $aQ$ denote the cube with same center as $Q$ but with $\ell(aQ)= a\ell(Q)$.
\end{defn}

The key tool in the proof of the square function estimates, as well as for the non-tangential maximal function estimates, are the following $L_2$ off-diagonal estimates.

\begin{prop}   \label{prop:L2offdiag}
  For any $m<\infty$, there exists $C_m<\infty$ such that
$$
  \|\Theta_t f\|_{L_2(F)}\lesssim C_m (t/\dist(E,F))^m \|f\|_2,
$$
for all $f\in L_2(\R^n;\C^N)$ with $\supp f\subset E$, and any closed subsets
$E, F\subset \R^n$ such that $\dist(E,F):=\inf\sett{|x-y|}{x\in E, y\in F}>0$.
\end{prop}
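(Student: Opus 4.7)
\textbf{Plan for the proof of Proposition~\ref{prop:L2offdiag}.} The strategy is to reduce to the known $L_2$ off-diagonal estimates for the individual resolvents of the one-sided perturbed operators $B_1D_1$ and $D_2B_2$, and then combine them by splitting the intermediate function according to a halved distance.

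First, I would invoke the $L_2$ off-diagonal estimates for the resolvents $R_t^1=(I+itB_1D_1)^{-1}$ and $R_t^2=(I+itD_2B_2)^{-1}$, which are standard for operators of the form $BD$ or $DB$ with $D$ self-adjoint first order constant coefficient and $B$ accretive multiplication. These are proved by a Caccioppoli-type argument using smooth cutoffs, commuting $D_i$ through the cutoff and reabsorbing the commutator at scale $t$; see for example \cite[Prop.~5.2]{AAM}. They state that for any $m$,
$$
  \|R_t^i f\|_{L_2(F)}\lesssim (t/\dist(E,F))^m \|f\|_2,\qquad i=1,2,
$$
whenever $\supp f\subset E$. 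Since $Q_t^2=\tfrac{i}{2}(R_t^2-R_{-t}^2)$, the same estimate holds for $Q_t^2$ with the same decay.

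Next, for the composition $\Theta_t f=Q_t^2 R_t^1 B_1 f$, set $d:=\dist(E,F)$ and introduce the halfway set
$$
  E' := \{x\in\R^n : \dist(x,E)< d/2\},
$$
so that $\dist(E',F)\ge d/2$ and $\dist(E,\R^n\setminus E')\ge d/2$. Crucially, since $B_1$ is a multiplication operator, $\supp(B_1 f)\subset \supp f\subset E$. Decompose
$$
  R_t^1 B_1 f = g_1 + g_2,\qquad g_1:=\mathbf{1}_{E'}\,R_t^1 B_1 f,\quad g_2:=\mathbf{1}_{\R^n\setminus E'}\,R_t^1 B_1 f,
$$
and apply $Q_t^2$ to each piece separately.

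For the $g_1$ term I use the off-diagonal decay of $Q_t^2$ together with $\dist(E',F)\ge d/2$, then uniform boundedness of $R_t^1$ and $B_1$, to get
$$
  \|Q_t^2 g_1\|_{L_2(F)} \lesssim (t/d)^m \|g_1\|_2 \lesssim (t/d)^m \|f\|_2.
$$
For the $g_2$ term I use uniform $L_2$ boundedness of $Q_t^2$, and then the off-diagonal decay of $R_t^1$ with source $\supp(B_1 f)\subset E$ and target $\R^n\setminus E'$, obtaining
$$
  \|Q_t^2 g_2\|_{L_2(F)} \lesssim \|g_2\|_2 = \|R_t^1 B_1 f\|_{L_2(\R^n\setminus E')} \lesssim (t/d)^m \|B_1 f\|_2 \lesssim (t/d)^m \|f\|_2.
$$
Summing the two contributions yields the claimed bound.

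There is no serious obstacle; the routine part is the bookkeeping of constants and the verification that the off-diagonal estimates for $R_t^i$ apply at every step. The only point that genuinely uses the structure of $\Theta_t$ rather than abstract operator norms is that $B_1$ preserves supports, which allows the intermediate decomposition to be anchored at $E$ via $B_1 f$.
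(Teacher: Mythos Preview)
Your proposal is correct and follows essentially the same route as the paper: the paper's proof simply invokes the known off-diagonal estimates for the resolvents $R_t^j$ (citing \cite[Sec.~5]{elAAM}) and then refers to \cite[Lem.~2.26]{AAH} for the composition, which is precisely the half-distance splitting argument you have written out in detail. The only difference is cosmetic: you cite \cite{AAM} rather than \cite{elAAM} for the resolvent estimates, and you spell out the composition lemma rather than citing it.
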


\begin{proof}
  These estimates are known to hold for $R_t^j$,
  and therefore for $R_{-t}^j$, $P_t^j$ and $Q_t^j$, $j=1,2$,
   see for example \cite[Sec. 5]{elAAM}.
  From this, the estimates for $\Theta_t:= Q_t^2 (I- R_t^1) B_1$ follow as in \cite[Lem. 2.26]{AAH}.
\end{proof}

These $L_2$ off-diagonal estimates enable us to approximate the family of operators 
$\{\Theta_t\}_{t>0}$ by a family of multiplication operators $\{\gamma_t\}_{t>0}$,
where formally $\gamma_t= \Theta_t 1$. More precisely, we let
$$
  \gamma_t(x) v:= \sum_{k=0}^\infty \Theta_t (v \chi_{A_k(Q)})(x), \qquad x\in Q\in\mD_t, v\in \C^N,
$$
where $\chi_{A_k(Q)}$ denotes the characteristic function of the dyadic annulus $A_k(Q)$.
From Proposition~\ref{prop:L2offdiag}, we have the estimate
\begin{equation}   \label{eq:princpartbound}
  \|\gamma_t\|_{L_2(Q)} \lesssim \sum_{k=0}^\infty 2^{-km} 2^{kn/2} \le C
\end{equation}
uniformly for all $Q\in\mD_t$, $t>0$, by
choosing $m>n/2$.

We also need the following Sobolev--Poincar\'e inequality, see \cite[Sec. 7.8]{GT}.

\begin{lem}   \label{lem:poincare}
  Let $1< r<n$, or $r=n=1$, and $1/r^*= 1/r-1/n$.
  Assume that $1\le q\le r^*$ and $r\le p\le \infty$. 
   Then there exists $C<\infty$ such that for all  $u\in H^1_\loc(\R^n)$ and $0<r\le R<\infty$, we 
  have the estimate
$$
  \| u-u_{S}\|_{L_q(\Omega)}\le C |\Omega|^{1/q-1/p+1/n} R^n |S|^{-1} \|\nabla u\|_{L_p(\Omega)},
$$
for any convex set $\Omega$ with diameter $R$ and measure $|\Omega|$ and any measurable subset $S\subset \Omega$ with measure $|S|$.
\end{lem}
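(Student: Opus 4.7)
The plan is to reduce to the classical Sobolev--Poincar\'e inequality on convex sets and then use H\"older's inequality twice to handle the non-sharp exponents $p,q$ and the swap from the average $u_\Omega$ to the average $u_S$.

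First, I invoke the convex Sobolev--Poincar\'e bound from \cite{GT}: starting from the pointwise representation $|u(x)-u_\Omega|\lesssim (R^n/|\Omega|)\int_\Omega|\nabla u(y)|\,|x-y|^{1-n}\,dy$ valid on convex $\Omega$ of diameter $R$, and applying the $L_r\to L_{r^*}$ boundedness of the Riesz potential $I_1$, one obtains
\[
  \|u - u_\Omega\|_{L_{r^*}(\Omega)} \lesssim (R^n/|\Omega|)\,\|\nabla u\|_{L_r(\Omega)}.
\]
The borderline case $r=n=1$ is elementary via the fundamental theorem of calculus. Using $q\le r^*$ on the left and $r\le p$ on the right, two applications of H\"older give
\[
  \|u-u_\Omega\|_{L_q(\Omega)}\le |\Omega|^{1/q-1/r^*}\|u-u_\Omega\|_{L_{r^*}(\Omega)}, \qquad \|\nabla u\|_{L_r(\Omega)}\le |\Omega|^{1/r-1/p}\|\nabla u\|_{L_p(\Omega)},
\]
and combining them, using the identity $1/r-1/r^*=1/n$, yields
\[
  \|u-u_\Omega\|_{L_q(\Omega)} \lesssim (R^n/|\Omega|)\,|\Omega|^{1/q-1/p+1/n}\,\|\nabla u\|_{L_p(\Omega)}.
\]

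Second, I swap $u_\Omega$ for $u_S$ via the elementary estimate $|u_\Omega - u_S|\le |S|^{-1}\int_\Omega |u-u_\Omega|\,dx$, which by H\"older becomes $|u_\Omega-u_S|\le (|\Omega|^{1-1/q}/|S|)\|u-u_\Omega\|_{L_q(\Omega)}$. Since $u_\Omega-u_S$ is constant on $\Omega$, multiplying by $|\Omega|^{1/q}$ shows $\|u_\Omega-u_S\|_{L_q(\Omega)}\le (|\Omega|/|S|)\|u-u_\Omega\|_{L_q(\Omega)}$. The triangle inequality together with $|S|\le|\Omega|$ then yields $\|u-u_S\|_{L_q(\Omega)}\lesssim (|\Omega|/|S|)\|u-u_\Omega\|_{L_q(\Omega)}$, and plugging this into the previous display collapses the two factors $|\Omega|^{\pm 1}$ to produce the claimed factor $R^n/|S|$.

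No genuine obstacle is expected; the whole proof is classical Sobolev--Poincar\'e plus exponent bookkeeping. The only point worth flagging is that the factor $R^n/|\Omega|$ produced by the Riesz-potential estimate on convex domains is exactly what allows the final constant to be expressed in the form $R^n|S|^{-1}|\Omega|^{1/q-1/p+1/n}$; keeping a clear eye on where these $|\Omega|$-powers come from and cancel is essentially the whole content.
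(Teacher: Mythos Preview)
Your argument is correct. The paper itself gives no proof of this lemma: it simply cites \cite[Sec.~7.8]{GT} and moves on, so your sketch is in fact more detailed than what the paper provides. Unpacking the Gilbarg--Trudinger reference leads to exactly the ingredients you use---the pointwise Riesz-potential representation on convex domains, Hardy--Littlewood--Sobolev, and H\"older for the exponent adjustments---so your approach is the intended one.

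One small simplification: Lemma~7.16 in \cite{GT} already states the pointwise bound with the average over an arbitrary measurable subset $S\subset\Omega$, namely $|u(x)-u_S|\le (R^n/(n|S|))\int_\Omega |\nabla u(y)|\,|x-y|^{1-n}\,dy$. Starting from that, your second ``swap'' step is unnecessary and the factor $R^n/|S|$ appears directly. Your route via $u_\Omega$ and then the triangle-inequality swap is perfectly valid, just slightly longer.
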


\begin{prop}    \label{prop:ppa}
  We have the estimate
$$
  \int_0^\infty \|\Theta_t f - \gamma_t E_t f\|_2^2 \frac {dt}t\lesssim \|f\|_2^2,\qquad\text{for all } f\in \mH_1,
$$
where $E_t$ denotes the dyadic averaging operator
$$
  E_t f(x)= E_Q f:= \frac 1{|Q|}\int_Q f(y) dy,\qquad x\in Q\in \mD_t.
$$
\end{prop}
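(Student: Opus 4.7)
The plan is to adapt the principal-part approximation scheme from \cite[Sec. 4]{AAH}, decomposing $f=P_t^1 f+(I-P_t^1)f$ and estimating each piece separately. First I would record the elementary pointwise identity, valid by absolute convergence in the definition of $\gamma_t$: for every $x\in Q\in\mD_t$ and $g\in L_2$,
$$
(\Theta_t g-\gamma_t E_t g)(x)=\sum_{k\ge 0}\Theta_t\big((g-E_Q g)\chi_{A_k(Q)}\big)(x),
$$
so Proposition~\ref{prop:L2offdiag} yields $\|(\Theta_t-\gamma_t E_t)g\|_{L_2(Q)}\lesssim \sum_{k\ge 0}2^{-km}\|g-E_Q g\|_{L_2(A_k(Q))}$ for any $m<\infty$.

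For the smooth piece $g=P_t^1 f$ the core input is a gradient estimate obtained from partial ellipticity of $D_1$ on $\mH_1$ together with accretivity of $B_1$ on $\mH_1$, producing
$$
t\|\nabla P_t^1 f\|_2\lesssim t\|D_1 P_t^1 f\|_2\lesssim t\|B_1 D_1 P_t^1 f\|_2=\|Q_t^1 f\|_2.
$$
Partial ellipticity only controls the $\mH_1$-component, but $P_t^1$ acts as the identity on $\nul(D_1)$, so the remaining component of $P_t^1 f$ coincides with that of $f$ and is killed once one passes to the oscillation $g-E_Q g$ via Lemma~\ref{lem:poincare}. Applying that Poincar\'e--Sobolev inequality on each annulus, combined with a telescoping bound for $E_{2^kQ}P_t^1 f-E_Q P_t^1 f$, gives $\|P_t^1 f-E_Q P_t^1 f\|_{L_2(A_k(Q))}\lesssim 2^k t\|\nabla P_t^1 f\|_{L_2(2^{k+1}Q)}$. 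Substituting into the annular sum, taking $m$ large enough to beat both the $2^k$ Poincar\'e growth and the $2^{kn}$ finite-overlap cost of dilated cubes, and summing over $Q\in\mD_t$ yields
$$
\|(\Theta_t-\gamma_t E_t)P_t^1 f\|_2\lesssim \|Q_t^1 f\|_2,
$$
whereupon integration against $dt/t$ and the known square-function estimate for $Q_t^1$ close this piece.

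For the rough piece $g=(I-P_t^1)f$ I would use the factorization $I-P_t^1=(tB_1D_1)Q_t^1$, together with the resolvent identity $tB_1D_1\, R_{\pm t}^1=\mp i(I-R_{\pm t}^1)$, to reduce $(\Theta_t-\gamma_t E_t)(I-P_t^1)$ to the composition of a uniformly $L_2$-bounded operator with $Q_t^1$; the residual commutator of $B_1$ with $tB_1D_1$ that arises is still controlled by the off-diagonal machinery of Proposition~\ref{prop:L2offdiag}, and the uniform $L_2$ bound on $\gamma_t E_t$ follows from \eqref{eq:princpartbound}. A final appeal to the square-function estimate for $Q_t^1$ closes the argument. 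The main obstacle is the first piece, where delicate bookkeeping of off-diagonal decay against Poincar\'e growth and finite-overlap multiplicity dictates the choice of $m$; a secondary subtlety is isolating the $\mH_1$-component of $P_t^1 f$ to which partial ellipticity actually applies.
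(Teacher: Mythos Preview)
Your decomposition has a genuine gap: it uses the \emph{perturbed} smoothing operator $P_t^1=(I+t^2(B_1D_1)^2)^{-1}$, whereas the argument only closes with the \emph{unperturbed} operator $P_t:=(I+t^2D_1^2)^{-1}$.

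For the smooth piece, your claim that ``$P_t^1$ acts as the identity on $\nul(D_1)$, so the remaining component of $P_t^1 f$ coincides with that of $f$'' is a non sequitur. The fact that $P_t^1|_{\nul(D_1)}=I$ does \emph{not} imply that the orthogonal projection onto $\nul(D_1)$ commutes with $P_t^1$. Indeed, for $f\in\mH_1$ one has $P_t^1f-f=-t^2(B_1D_1)^2P_t^1f\in B_1\mH_1$, and $B_1\mH_1\not\subset\mH_1$ in general, so the $\nul(D_1)$-component of $P_t^1f$ is nonzero and $t$-dependent. Partial ellipticity therefore does not yield $t\|\nabla P_t^1f\|_2\lesssim\|Q_t^1f\|_2$, and there is no evident substitute: controlling $\nabla$ of the $\nul(D_1)$-component would require differentiating through the merely $L_\infty$ coefficient $B_1$.

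For the rough piece, the same obstruction appears. Writing $\Theta_t(I-P_t^1)=Q_t^2R_t^1B_1(tB_1D_1)Q_t^1$, the extra $B_1$ is stuck between $R_t^1$ and $tB_1D_1$: the resolvent identity $R_t^1(tB_1D_1)=-i(I-R_t^1)$ does not apply, and the commutator $[B_1,tB_1D_1]$ you invoke is not a bounded operator, since $B_1\in L_\infty$ only. Nor does $\gamma_tE_t(I-P_t^1)$ reduce to a bounded operator composed with $Q_t^1$.

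The paper avoids both issues by smoothing with the unperturbed $P_t$, which preserves $\mH_1$ (being a function of the self-adjoint $D_1$), so that partial ellipticity applies directly to $P_tf$ and one has the clean algebra $B_1(I-P_t)=(tB_1D_1)Q_t$ with a single $B_1$. The price is a three-term rather than two-term split,
\[
\Theta_t f-\gamma_tE_tf=\Theta_t(I-P_t)f+(\Theta_t-\gamma_tE_t)P_tf+\gamma_tE_t(P_t-I)f,
\]
the third term being handled by a separate square function estimate for $E_t(P_t-I)$ (a standard Littlewood--Paley fact for the self-adjoint operator $D_1$). Your Poincar\'e/off-diagonal bookkeeping for the middle term is then exactly what is needed.
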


\begin{proof}
  Let $P_t, Q_t$ denote the unpertubed operators $P_t^1, Q_t^1$, that is 
$$
  P_t:= (I+t^2 D_1^2)^{-1}, \qquad Q_t:= tD_1(I+t^2 D_1^2)^{-1}.
$$
Write
$$
  \Theta_t f - \gamma_t E_t f= \Theta_t(I-P_t)f+ (\Theta_t-\gamma_t E_t)P_tf+ (\gamma_t E_t)E_t(P_t-I)f=: I+II+III.
$$
For the first term we have
$$
  \|I\|_2= \|Q_t^2 (I-R_t^1) Q_t f\|_2\lesssim \|Q_t f\|_2,
$$
since $(I+itB_1D_1)^{-1}B_1(t^2D_1^2(I+t^2D_1^2))=(I+itB_1D_1)^{-1}(tB_1D_1)Q_t$,
and square function estimates for $Q_t$ give the desired estimate.

For the term $III$ we note from \eqref{eq:princpartbound}
that $\|\gamma_t E_t\|_{L_2\to L_2}\le C$.
Square function estimates for $E_t(P_t-I)$ can be proved as in \cite[Prop. 5.7]{AKMc}, replacing $\Pi$ there by the operator $D_1$.

The term $II$ we write
$$
  (\Theta_t-\gamma_t E_t)P_tf= \sum_{k\ge 0} \Theta_t\big((P_t f-E_Q(P_t f))\chi_{A_k(Q)} \big), \qquad\text{on } Q\in \mD_t.
$$
Proposition~\ref{prop:L2offdiag} and Poincar\'e's inequality in Lemma~\ref{lem:poincare} yields
\begin{multline*}
  \|II\|_2^2\lesssim \sum_{Q\in \mD_t}  \left(\sum_{k=0}^\infty 2^{-km} \|P_t f-E_Q(P_t f)\|_{L_2(A_k(Q))} \right)^2 \\
  \lesssim
  \sum_{Q\in \mD_t} \sum_{k=0}^\infty 2^{-km} \|P_t f-E_Q(P_t f)\|_{L_2(A_k(Q))}^2 \\
  \lesssim   \sum_{Q\in \mD_t} \sum_{k=0}^\infty 2^{-km+2k(n+1)} \|t\nabla P_t f\|_{L_2(A_k(Q))}^2
  \approx \|t\nabla P_t f\|_2^2,
\end{multline*}
if we choose $m$ sufficiently large.
Since $D_1$ is elliptic on $\ran(D_1)$, the square function estimate for $II$ follows from that for $Q_t$.
\end{proof}

To prove square function estimates for the remaining paraproduct term $\gamma_t E_t f$, we 
use the following test functions. 
For a small fixed parameter $\epsilon>0$, we define for all dyadic cubes $Q\in \mD$ and unit vectors $v\in\C^N$, the test function
$$
  f_Q^v:= (I+ (\epsilon\ell(Q)D_1B_1)^2)^{-1}(\eta_Q v),
$$
where $\eta_Q=1$ on $2Q$ and $\supp \eta_Q\subset (3Q)$, $\|\nabla \eta_Q\|_\infty\lesssim 1/\ell(Q)$.
The parameter $\epsilon$ is chosen so that the accretivity condition
$$
  \re\left( v, \int_Q f_Q^v dx \right) \ge |Q|/2 
$$
holds. This is possible since it is known that we have the estimate
$$
  \left| \frac 1{|Q|}\int_Q f_Q^v dx -v  \right|\lesssim \sqrt\epsilon.
$$
This can be proved by applying \cite[Lem. 5.6]{AKMc} with the operator $D_1$, similar to 
\cite[Lem. 5.10]{AKMc}.

\begin{prop}    \label{prop:Tb}  
  We have the estimate
$$
  \int_0^\infty\int_{\R^n}  |E_t f(x)|^2 |\gamma_t(x)|^2\frac {dxdt}t\lesssim \|f\|_2^2,\qquad\text{for all } f\in L_2(\R^n;\C^N).
$$
\end{prop}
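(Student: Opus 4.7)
My plan is to show that $d\mu(t,x):=|\gamma_t(x)|^2\frac{dx\,dt}{t}$ is a Carleson measure on $\R^{1+n}_+$; the proposition then follows from the Carleson embedding theorem applied to $F(t,x):=E_tf(x)$, using the pointwise bound $|E_tf|\le Mf$ (Hardy--Littlewood maximal function) and $\|Mf\|_2\lesssim\|f\|_2$. Concretely, the Carleson bound to be established is
\begin{equation*}
\iint_{T(Q)}|\gamma_t(x)|^2\frac{dx\,dt}{t}\lesssim|Q|,\qquad Q\in\mD,\quad T(Q):=Q\times(0,\ell(Q)].
\end{equation*}

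For this Carleson bound I would run a $T(b)$-type argument with the test functions $f_Q^v$. Fix a finite $\sigma$-net $\{v_\nu\}_{\nu=1}^K$ of the unit sphere of $\C^N$ for $\sigma$ small; since $|\gamma_t(x)|^2\lesssim\max_\nu|\gamma_t(x)v_\nu|^2$, it suffices to bound $\iint_{T(Q)}|\gamma_tv|^2\frac{dx\,dt}{t}$ for each fixed $v\in\{v_\nu\}$. Perform a stopping time: let $\mathcal F=\mathcal F(Q,v)$ be the collection of maximal dyadic subcubes $Q'\subseteq Q$ on which either $|E_{Q'}f_Q^v|>K_0$ or $\re(v,E_{Q'}f_Q^v)<\delta_0$, for a large constant $K_0$ and a small $\delta_0$. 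Combining the accretivity $\re(v,E_Qf_Q^v)\ge\tfrac12$, the uniform bound $\|f_Q^v\|_2^2\lesssim|Q|$ and the weak $(2,2)$ estimate for the dyadic maximal function should yield $\sum_{Q'\in\mathcal F}|Q'|\le c|Q|$ for some $c<1$. This stopping-time mass bound is the main obstacle of the argument: Chebyshev alone gives only $c\le C$ (possibly $\ge 1$), and one needs a self-improvement exploiting the smallness $|E_Qf_Q^v-v|\lesssim\sqrt\epsilon$ in addition to the $L_2$ bound (a John--Nirenberg-type argument for accretivity) to push $c$ below one. With $c<1$ in hand, the sawtooth decomposition $T(Q)=G(Q)\sqcup\bigcup_{Q'\in\mathcal F}T(Q')$ (with $G(Q)$ the good sawtooth) and a standard absorption into the Carleson norm reduce the estimate to a uniform bound over $G(Q)$.

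On $G(Q)$, by the stopping time $E_tf_Q^v(x)$ stays in a small neighborhood of $v$, so an accretivity/cone argument gives the pointwise domination $|\gamma_t(x)v|^2\lesssim|\gamma_t(x)E_tf_Q^v(x)|^2$. Therefore
\begin{equation*}
\iint_{G(Q)}|\gamma_tv|^2\frac{dx\,dt}{t}\lesssim\int_0^\infty\|\gamma_tE_tf_Q^v\|_2^2\frac{dt}{t}\lesssim\int_0^\infty\|\Theta_tf_Q^v\|_2^2\frac{dt}{t}+\|f_Q^v\|_2^2,
\end{equation*}
where the last inequality uses Proposition~\ref{prop:ppa} applied to (the $\mH_1$-part of) $f_Q^v$; the orthogonal part lies in $\nul(D_1)$ and its contribution is controlled by direct inspection of $\Theta_t=Q_t^2R_t^1B_1$. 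The remaining square function $\int_0^\infty\|\Theta_tf_Q^v\|_2^2\frac{dt}{t}$ is bounded $\lesssim|Q|$ by exploiting the structure of $f_Q^v$: the intertwining identity gives $B_1f_Q^v=(I+(\epsilon\ell(Q)B_1D_1)^2)^{-1}B_1\eta_Qv$, and splitting $\int_0^\infty=\int_0^{\epsilon\ell(Q)}+\int_{\epsilon\ell(Q)}^\infty$ one applies the known square function estimate for $Q_t^2$ together with $\|R_t^1B_1f_Q^v\|_2\lesssim\|B_1\eta_Qv\|_2\lesssim|Q|^{1/2}$ on the small-$t$ part, and the $(\epsilon\ell(Q)/t)^2$-decay of $(tB_1D_1)(I+(\epsilon\ell(Q)B_1D_1)^2)^{-1}$ together with $\|Q_t^2\|\lesssim 1$ on the large-$t$ part. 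This closes the Carleson estimate and hence proves Proposition~\ref{prop:Tb}.
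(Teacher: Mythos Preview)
Your overall plan---Carleson embedding, then a $T(b)$ argument with the test functions $f_Q^v$, a stopping-time construction and a John--Nirenberg bootstrap---is exactly the paper's route. Two of your execution steps, however, do not go through as written.

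First, you cannot feed $f_Q^v$ into Proposition~\ref{prop:ppa} by orthogonally splitting it into an $\mH_1$-part and a $\nul(D_1)$-part and handling the latter ``by direct inspection''. The projection $P_{\nul(D_1)}f_Q^v$ is just an $L_2$ function of size $|Q|^{1/2}$ with no further structure; the term $\gamma_tE_t(P_{\nul(D_1)}f_Q^v)$ leads straight back to the Carleson bound you are trying to prove, and $\Theta_t(P_{\nul(D_1)}f_Q^v)=Q_t^2R_t^1B_1(P_{\nul(D_1)}f_Q^v)$ has no simplification since $B_1$ does not preserve $\nul(D_1)$. The decomposition that works is the explicit one the paper uses: $f_Q^v-\eta_Qv=-(\epsilon\ell(Q))D_1B_1\,Q^1_{\epsilon\ell(Q)}(\eta_Qv)\in\ran(D_1)$, so Proposition~\ref{prop:ppa} applies to this difference, while $(\Theta_t-\gamma_tE_t)(\eta_Qv)$ is controlled on $Q$ directly by off-diagonal decay because $\eta_Q\equiv 1$ on $2Q$. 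Second, your control of $\int_0^\infty\|\Theta_tf_Q^v\|_2^2\,dt/t$ is not correct. On the small-$t$ range the square function estimate for $Q_t^2$ needs a $t$-independent argument; peeling off $R_t^1$ via $R_t^1=I-itR_t^1B_1D_1$ gives $\|\Theta_tf_Q^v\|_2\lesssim\|Q_t^2(B_1f_Q^v)\|_2+(t/\epsilon\ell(Q))|Q|^{1/2}$, but the error is only $dt/t$-integrable over $(0,\ell(Q))$, and your asserted $(\epsilon\ell(Q)/t)^2$ decay of $(tB_1D_1)(I+(\epsilon\ell(Q)B_1D_1)^2)^{-1}$ for large $t$ is false---that operator has norm $\approx t/\epsilon\ell(Q)$. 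There is no need to go beyond the Carleson box; keep all $t$-integrals over $(0,\ell(Q))$ as the paper does. A smaller point: your stopping conditions $|E_{Q'}f_Q^v|>K_0$ or $\re(v,E_{Q'}f_Q^v)<\delta_0$ do not force $E_tf_Q^v$ into a small neighborhood of $v$, so the pointwise domination $|\gamma_t(x)v|\lesssim|\gamma_t(x)E_tf_Q^v(x)|$ fails for general matrices $\gamma_t(x)$; the standard mechanism (which the paper cites from \cite{AKMc}) is a cone decomposition of $\mL(\C^N)$, with the domination holding only when $\gamma_t(x)$ lies in the cone associated to $v$.
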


\begin{proof}
  By Carleson's embedding theorem, it suffices to show that
$$
  \int_0^{\ell(Q)}\int_Q  |\gamma_t(x)|^2\frac {dxdt}t\lesssim |Q|,\qquad \text{for all } Q\in\mD.
$$
Following the proof of the Kato square root problem, see for example \cite[Sec. 5.3]{AKMc}, we now
do (1) a sufficiently fine sectorial decomposition of $\mL(\C^N)$, run (2) a stopping time argument
to construct an large sawtooth sub region of the Carleson box $Q\times (0,\ell(Q))$ where the test function
$f_Q^v$ is paraaccretive, and make (3) a John--Nirenberg bootstrapping argument for Carleson measure,
to show that it suffices to prove the estimate
$$
  \int_0^{\ell(Q)}\int_Q  |\gamma_t(x)E_t f_Q^v|^2\frac {dxdt}t\lesssim |Q|
$$
for all $Q\in\mD$ and unit vectors $v\in\C^N$.

We first note that 
\begin{multline*}
  \|\Theta_t f_Q^v\|_2= 
  \| Q_t^2 (B_1 f_Q^v)- it Q_t^2  R_t^1 B_1 (D_1B_1)(I+ (\epsilon\ell(Q)D_1B_1)^2)^{-1} (\eta_Q v)\|_2\\ \lesssim \| Q_t^2 (B_1 f_Q^v) \|_2+ t/\epsilon\ell(Q)\sqrt{|Q|},
\end{multline*}
since $Q_t^2$ and $R_t^1$ and $\epsilon\ell(Q) D_1B_1(I+ (\epsilon\ell(Q)D_1B_1)^2)^{-1} $ are uniformly bounded.
Therefore 
$$
    \int_0^{\ell(Q)}  \|\Theta_t f_Q^v\|^2\frac {dt}t\lesssim \|B_1f_Q^v\|_2^2 + |Q|\lesssim |Q|,
$$
so it suffices to prove
$$
  \int_0^{\ell(Q)}\int_Q  |(\Theta_t-\gamma_tE_t) f_Q^v|^2\frac {dxdt}t\lesssim |Q|.
$$
Although $f_Q^v\notin\mH_1$ in general, we have that
$f_Q^v-\eta_Q v= -\epsilon\ell(Q)D_1B_1 Q_{\epsilon\ell(Q)}^1 (\eta_Q v)\in \ran(D_1)$.
Thus Proposition~\ref{prop:ppa} applies to $f=f_Q^v-\eta_Q v$, and it remains to show
$$
  \int_0^{\ell(Q)}\int_Q  |(\Theta_t-\gamma_tE_t) (\eta_Q v)|^2\frac {dxdt}t\lesssim |Q|.
$$
We have on $Q$ that
$$
  (\Theta_t-\gamma_t E_t)(\eta_Q v)= \sum_{k\ge 0} \Theta_t\big(v(\eta_Q -1)\chi_{A_k(Q)} \big)
  = \sum_{k\ge 2} \Theta_t\big(v(\eta_Q -1)\chi_{A_k(Q)} \big),
$$
and Proposition~\ref{prop:L2offdiag} gives
$$
  \|(\Theta_t-\gamma_t E_t)(\eta_Q v)\|_{L_2(Q)}\lesssim \sum_{k=2}^\infty (t/2^k\ell(Q))^m (2^{nk}|Q|)^{1/2}\lesssim (t/\ell(Q))^{m} |Q|^{1/2}, 
$$
if $m$ is chosen large enough.
We obtain
$$
  \int_0^{\ell(Q)}\int_Q  |(\Theta_t-\gamma_tE_t) (\eta_Q v)|^2\frac {dxdt}t\lesssim  \int_0^{\ell(Q)} (t/\ell(Q))^{2m}|Q| \frac{dt}t\approx  |Q|
$$
and the proof is complete.
\end{proof}

\section{Non-tangential maximal function estimates}    \label{sec:maxfcn}

In this section, we prove the non-tangential maximal function estimates for the operator $T$ in Theorem~\ref{thm:main}.
We first consider the operator $D_2B_2$, that is the operator $T$ in the special case when $D_1=0$.
We prove the following result.

\begin{thm}    \label{thm:DBNT}
We have non-tangential maximal function estimates
$$
  \|\tN(\phi(tD_2B_2)f)\|_2 \lesssim \|f\|_2, \qquad f\in \mH_2,
$$
for $\phi$ as in Theorem~\ref{thm:main}.
\end{thm}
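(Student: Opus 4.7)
The plan is to split $\phi$ into an explicit resolvent and a remainder with square-function decay, and handle each part with a distinct technique. Write $\eta(\lambda) := 1/(1+i\lambda)$ and $\psi(\lambda) := \phi(\lambda) - \eta(\lambda)$; since $\eta(0) = 1$ and $|\eta(\lambda)|\lesssim |\lambda|^{-1}$, the hypothesis on $\phi$ gives $|\psi(\lambda)|\lesssim \min(|\lambda|^{s'},|\lambda|^{-s'})$ for $s' = \min(s,1)$, so $\phi(tD_2B_2)f = R_t^2 f + \psi(tD_2B_2)f$ and it suffices to bound the two pieces separately on $\mH_2$.

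\emph{Remainder $\psi(tD_2B_2)f$.} Since $D_2B_2$ is of type~\eqref{eq:type1}, the standard square function bound $\int_0^\infty \|\psi(tD_2B_2)f\|_2^2\,dt/t \lesssim \|f\|_2^2$ for $f\in\mH_2$ is available from the established $BD$-theory. To upgrade it to a non-tangential bound, I would use a Calder\'on reproducing formula $f = c^{-1}\int_0^\infty (Q_r^2)^2 f\,dr/r$ on $\mH_2$, write
$$
   \psi(sD_2B_2)f = c^{-1}\int_0^\infty \psi(sD_2B_2)Q_r^2 \cdot Q_r^2 f\,\frac{dr}{r},
$$
and exploit the $L_2$ off-diagonal decay of $\psi(sD_2B_2)Q_r^2$ (an analogue of Proposition~\ref{prop:L2offdiag} for $D_2B_2$, standard from \cite{elAAM}). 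A Schur-type estimate in $(r,s)$ then yields the pointwise comparison
$$
  \left(\iint_{W(t,x)}|\psi(sD_2B_2)f(y)|^2\,\frac{dsdy}{|W(t,x)|}\right)^{1/2} \lesssim \mathcal{A}(x),
$$
with $\mathcal{A}(x)^2 := \iint_{|y-x|<r}|Q_r^2 f(y)|^2\,\frac{dy\,dr}{r^{n+1}}$ the standard conical area function, whose $L_2$ norm is bounded by $\|f\|_2$ via Fubini and the square function bound for $Q_r^2$.

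\emph{Resolvent $R_t^2$.} I would compare $R_t^2 f$ with the dyadic averaging operator $E_t$ from Section~\ref{sec:sqfcn}. The pointwise bound $\tN(E_t f)(x)\lesssim Mf(x)$, where $M$ is the Hardy-Littlewood maximal operator, combined with $L_2$-boundedness of $M$, gives $\|\tN(E_t f)\|_2\lesssim \|f\|_2$ immediately. It remains to bound $\tN((R_t^2 - E_t)f)$. On each $Q\in\mD_t$, I would decompose $f$ across the dyadic annuli $A_k(Q)$, use the standard $L_2$ off-diagonal decay of $R_t^2$ to control the tail $k\ge 1$, and apply Poincar\'e's inequality (Lemma~\ref{lem:poincare}) to the local piece $R_t^2 f - E_t f$. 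The Caccioppoli-type Lemma~\ref{lem:cacc} then converts local $L_2$ bounds on derivatives of $R_s^2 f$ inside the Whitney region $W(t,x)$ into bounds on $R_s^2 f$ itself, letting the Whitney-region integral close.

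The hard step is the resolvent estimate: in \cite{AAH} this was handled by $L_p$ off-diagonal decay for $p$ close to $2$ obtained via interpolation, a tool not available at this level of generality. The $L_2$ substitute is Lemma~\ref{lem:cacc}, the Caccioppoli-type inequality that supplies the regularity of resolvents needed to pass from $L_2$-averaged estimates on Whitney regions to the desired non-tangential bound on $\tN(R_t^2 f)$.
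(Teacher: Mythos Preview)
Your split $\phi=\eta+\psi$ and the treatment of the remainder $\psi(tD_2B_2)f$ are fine in spirit, though overcomplicated: the simple observation that each Whitney average is dominated by the conical square function, together with Fubini, already gives $\|\tN(\psi(tD_2B_2)f)\|_2^2\lesssim\int_0^\infty\|\psi(tD_2B_2)f\|_2^2\,dt/t$ without any reproducing formula or Schur argument. The paper does exactly this, citing \cite[Lem.~5.3]{AA1}.

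The genuine gap is in your treatment of the resolvent $R_t^2 f$. Two problems. First, Lemma~\ref{lem:cacc} applies only to functions $v$ satisfying $\partial_t v+BDv=0$; the resolvent $(s,x)\mapsto R_s^2 f(x)$ does \emph{not} satisfy this equation, so you cannot invoke Caccioppoli on it. Second, you have the direction of Caccioppoli backwards: it bounds $Dv$ by $t^{-1}v$, not the other way around, so it cannot ``convert bounds on derivatives into bounds on $R_s^2 f$ itself''. Your Poincar\'e step is also unclear, since $f$ is merely in $L_2$ and there is no $t$-integral available (as in the square function setting) to absorb a factor of $t\|\nabla(\cdot)\|$.

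What the paper does instead is replace the resolvent by the semigroups $e^{\mp tDB}\chi^\pm(DB)$ as the main term---this is essential, not cosmetic. Writing $\chi^+(DB)f=Dv$ for some $v$ (possible since $f\in\ran(D)$), one constructs an auxiliary $w(t,x)$ with $Dw=e^{-tDB}\chi^+(DB)f$ and $\partial_t w+BDw=0$. Caccioppoli then bounds the target $|Dw|^2$ on a Whitney region by $|t^{-1}w|^2$ on a slightly larger one, and $w$ is built as a sum of a square-function-controllable piece and a piece $\phi_1(tBD)(Pv-[Pv])$ to which off-diagonal decay plus Sobolev--Poincar\'e and the Hardy--Littlewood maximal function apply. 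The passage to a potential $v$ and the semigroup structure are the missing ideas.
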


This result was proved in \cite[Prop. 2.56]{AAH} for operators $DB$ with $D$ of the form 
$D=\begin{bmatrix} 0 & \divv \\ -\nabla & 0 \end{bmatrix}$,
which appear in connection with boundary value problems for divergence form elliptic system. 
Below we give a simplified proof for general operators of the form $DB$.
Before doing so, we complete the proof of Theorem~\ref{thm:main} using Theorem~\ref{thm:DBNT}.

\begin{proof}[Proof of Theorem~\ref{thm:main}]
  Proposition~\ref{prop:blockresolvent} proves that $T$ is closed, densely defined and has the stated estimates of spectrum and resolvents.
  The square function estimates for $\psi(tT)$ follows from Propositions~\ref{prop:ppa} and \ref{prop:Tb}, using Lemma~\ref{lem:reductsqfcn}.
  Using Proposition~\ref{prop:duality}, the reverse square function estimates follow by duality.
  
  To prove the non-tangential maximal function estimates, we first note that estimates $\gtrsim$ hold since
  $\|\phi(tT)f-f\|_2\to 0$ as $t\to 0$ and 
$$
  \sup_{t>0} t^{-1}\int_t^{2t} \|\phi(sT)f\|_2^2 ds \lesssim \|\tN(\phi(tT)f)\|_2^2,
$$
as proved in \cite[Lem. 5.3]{AA1}.
 For the estimate $\lesssim$,
  it suffices to consider the resolvents $(I+itT)^{-1}$, since
$$
  \|\tN( \phi(tT)f-(I+itT)^{-1}f  )\|_2^2 \lesssim \int_0^\infty \| \phi(tT)f-(I+itT)^{-1}f  \|_2^2\frac{dt}t\lesssim \|f\|^2,
$$
where a proof of the first estimate can be found in \cite[Lem. 5.3]{AA1} and the second estimate follows from square function estimates with $\psi(\lambda):= \phi(\lambda)-(1+i\lambda)^{-1}$.
  By Proposition~\ref{prop:blockresolvent}, it remains to prove 
\begin{align}
  \|\tN( R_t^2 f)\|_2 &\lesssim \|f\|_2, \quad f\in \mH_2, \label{eq:est1} \\
    \|\tN( R_t^1 f)\|_2 &\lesssim \|f\|_2, \quad f\in B_1\mH_1, \label{eq:est2}  \\
      \|\tN( R_t^2 R_t^1 f )\|_2 &\lesssim \|f\|_2, \quad f\in B_1\mH_1. \label{eq:est3} 
\end{align}
  Estimate \eqref{eq:est1} follows from Theorem~\ref{thm:DBNT}.
  For \eqref{eq:est2}, we  estimate
$$
  \|\tN( R_t^1 B_1 v)\|_2= \|\tN(  B_1 (I+itD_1B_1)^{-1} v)  \|_2\lesssim \|v\|_2\lesssim \|B_1 v\|_2,\qquad v\in\mH_1,
$$
using that $B_1$ is a bounded multiplication operator and  Theorem~\ref{thm:DBNT}, with $D_2B_2$ replaced by $D_1B_1$, for the first estimate, and that $B_1$ is accretive on $\mH_1$ for the second estimate.
  
  To prove \eqref{eq:est3}, we introduce the auxiliary non-tangential maximal functions
$$
  \tN^k u(x):= \sup_{t>0} \left( 2^{-kn} t^{-(1+n)} \iint_{B(x,2^k t)\times (t/2, t)} |u(s,y)|^2 dsdy \right)^{1/2},
  \qquad k\ge 0.
$$
From $L_2$ off-diagonal estimates for $R_t^2$, as in Proposition~\ref{prop:L2offdiag}, we get that
$$
  \tN^0(R_t^2 R_t^1 f)(x)\lesssim \sum_{k=0}^\infty 2^{-km} \tN^k( R_t^1 f)(x).
$$
We claim that 
$$
  \| \tN^k( R_t^1 f) \|_2 \lesssim  2^{kn/2} \| \tN^0(R_t^1 f) \|_2.
$$
Thus, choosing $m$ large, we obtain the desired estimates
$$
  \| \tN^0(R_t^2 R_t^1 f) \|_2 \lesssim \| \tN^0(R_t^1 f) \|_2 \lesssim \|f\|_2.
$$
To prove the claim, assume that $\tN^k g(x)>\lambda$.
It follows that there exists $x_1\in B(x_0, 2^k t)$ such that $t^{-(1+n)}\iint_{B(x_1, t/2)\times (t/2,t)}|g|^2\gtrsim \lambda^2$.
We conclude that for all $y\in B(x_1, t/2)$ we have $\tN^0 g(y)\gtrsim \lambda$,
and therefore
$$
  |\sett{x}{\tN^k g(x)>\lambda}|\le |\sett{x}{M(\chi_{\sett{y}{\tN^0 g(y)\gtrsim \lambda}})(x)\gtrsim 2^{-kn}}|\lesssim 2^{kn}|\sett{y}{\tN^0 g(y)\gtrsim\lambda}|,
$$
using the weak $L_1$ boundedness of the Hardy--Littlewood maximal function $M$, from which the claim follows,.
\end{proof}

We now turn to the proof of Theorem~\ref{thm:DBNT}, where we write $D:= D_2$ and $B:= B_2$ to simplify notation.
We use the following version of the Caccioppoli estimate for equations of the form $\pd_t v+ BDv=0$.

\begin{lem}  \label{lem:cacc}
  Let $\eta\in C_0^\infty(\R^{1+n})$ and consider a function $v$ solving $\pd_t v+ BD v=0$ on a neighbourhood of $\supp\eta$.
  Then 
  $$
    \iint |Dv(t,x)|^2|\eta(t,x)|^2 dtdx\lesssim \iint |v(t,x)|^2 |\nabla \eta(t,x)|^2 dtdx.
  $$
\end{lem}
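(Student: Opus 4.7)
The plan is to test the equation against $Df$, where $f := \eta v$ is the localization of $v$, exploiting the $L_2$ accretivity of $B$ on $\ran(D)$.

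Since $D = \sum_j D_j \pd_j$ has constant coefficients, Leibniz' rule gives
$$
  Df = \eta\, Dv + Av, \qquad A := \sum_j D_j (\pd_j \eta),
$$
where $A$ is pointwise multiplication by a matrix with $|A(t,x)| \lesssim |\nabla_x\eta(t,x)|$. Using $\pd_t v + BDv = 0$, a short computation yields
$$
  \pd_t f + BDf = (\pd_t \eta)\,v + BAv =: g, \qquad |g| \lesssim |\nabla\eta|\,|v|,
$$
where $\nabla$ denotes the full $(1+n)$-dimensional gradient. In particular $\iint|g|^2\,dtdx \lesssim \iint|\nabla\eta|^2|v|^2\,dtdx$.

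Next I pair the identity $\pd_t f + BDf = g$ with $Df$ in $L_2(\R^{1+n};\C^N)$ and take real parts. For the cross-term, since $f$ has compact support, self-adjointness of $D$ on $L_2(\R^n)$ together with $[\pd_t, D]=0$ and integration by parts in $t$ give
$$
  \iint(\pd_t f, Df)\,dtdx = \iint(\pd_t Df, f)\,dtdx = -\iint(Df, \pd_t f)\,dtdx = -\overline{\iint(\pd_t f, Df)\,dtdx},
$$
so this quantity is purely imaginary and drops out. For each fixed $t$, $Df(t,\cdot) \in \ran(D)$, and accretivity of $B$ on $\ran(D)$ applied fiberwise in $t$ and integrated gives
$$
  \re \iint (BDf, Df)\,dtdx \gtrsim \iint|Df|^2\,dtdx.
$$
Combined with Cauchy--Schwarz and absorption on the right-hand side, this yields $\iint|Df|^2\,dtdx \lesssim \iint|g|^2\,dtdx \lesssim \iint|\nabla\eta|^2|v|^2\,dtdx$.

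To conclude, the pointwise bound $|\eta Dv| \leq |Df| + |Av|$ gives
$$
  \iint|Dv|^2|\eta|^2\,dtdx \lesssim \iint|Df|^2\,dtdx + \iint|\nabla_x\eta|^2|v|^2\,dtdx \lesssim \iint|\nabla\eta|^2|v|^2\,dtdx,
$$
as desired. The delicate step is the vanishing of the $(\pd_t f, Df)$ cross-term, which is what lets us get by with only the $L_2$-accretivity of $B$ on $\ran(D)$ rather than pointwise accretivity of $B$ on $\C^N$; without this cancellation we would be stuck with an uncontrollable contribution of $\pd_t f$.
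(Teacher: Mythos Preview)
Your proof is correct and follows essentially the same approach as the paper: both localize with $\eta$, exploit that the $\pd_t$--$D$ cross-term has vanishing real part after integration by parts, and apply the accretivity of $B$ on $\ran(D)$ to $D(\eta v)$ followed by absorption. Your organization---first writing the equation $\pd_t f + BDf = g$ for $f=\eta v$ and then pairing with $Df$---is in fact slightly more streamlined than the paper's version, which manipulates $\iint(BDv,Dv)\eta^2$ directly via two integrations by parts before invoking accretivity.
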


\begin{proof}
  Integrating by part twice, using anti-symmetry of $\pd_t$ and symmetry of $D$, we obtain
\begin{multline*}
  \iint (BDv, Dv)\eta^2 = -\iint (\pd_t v, Dv)\eta^2 \\
  = \iint (v, \pd_t D v)\eta^2+ 2\iint (v,Dv)\eta\pd_t \eta 
   =-\iint (v, DBD v)\eta^2+ 2\iint (v,Dv)\eta\pd_t \eta \\
   =  -\iint (D(\eta^2v), BD v)+ 2\iint (v,Dv)\eta\pd_t \eta.
\end{multline*}
We get the estimate
$$
 \re\iint (BDv, Dv)\eta^2\lesssim \iint |v||Dv| |\eta||\nabla\eta|.
$$
For each $t\in\R$, we have by the accretivity of $B$ that
$$
  \int |D(\eta v)|^2 dx \lesssim \re \int (BD(\eta v), D(\eta v)) dx.
$$
Integrating both sides with respect to $t$ and using the product rule for the derivatives, we get
$$
   \iint |Dv|^2\eta^2 dtdx\lesssim \iint (|Dv|\eta)( |v||\nabla\eta| ) dtdx+ \iint |v|^2 |\nabla \eta|^2 dtdx.
$$
Using the absorption inequality, we obtain the stated estimate.
\end{proof}

\begin{proof}[Proof of Theorem~\ref{thm:DBNT}]
 To prove the non-tangential maximal function estimates, it suffices to estimate the semigroups $e^{-tDB}\chi^+(DB)$ and $e^{tDB}\chi^-(DB)$, since
\begin{multline*}
  \|\tN( \phi(tDB)f-e^{-tDB}\chi^+(DB)f-e^{tDB}\chi^-(DB)f  )\|_2^2 \\
  \lesssim \int_0^\infty \| \phi(tT)f-e^{-tDB}\chi^+(DB)f-e^{tDB}\chi^-(DB)f \|_2^2\frac{dt}t\lesssim \|f\|^2,
\end{multline*}
where the second estimate follows from square function estimates for $DB$ with $\psi(\lambda):= \phi(\lambda)-e^{-|\lambda|}$.
Moreover, by a limiting argument, we may assume that $f\in\ran(D)$.

Consider first $e^{-tDB}\chi^+(DB)f$ and a Whitney region $W= B(x_0, t_0)\times (t_0/2, t_0)$.
Write $\chi^+(DB)f=Dv$ with $v\in \chi^+(BD)L_2\cap\dom(D)$.
Let $P$ be the orthogonal projection onto $\clos{\ran(D)}$, and denote by
$[Pv]:= |B(x_0,t_0)|^{-1}\int_{B(x_0,t_0)} Pv(x) dx$ the average of $Pv\in \clos{\ran(D)}$.
Note that $Dv= D(Pv)$ since $P$ projects along the null space $\nul(D)$.
Define the function
$$
  w(t,x):= t\psi_1(tBD) BD v(x)+ \phi_1(tBD)(Pv-[Pv])(x), \qquad (t,x)\in\R^{1+n}_+,
$$
where
\begin{align*}
  \psi_1(\lambda)&:= \lambda^{-1}\big(e^{-\lambda}-(1-\lambda)/(1+\lambda^2)\big) \chi^+(\lambda), \\
  \phi_1(\lambda)&:= (1-\lambda)/(1+\lambda^2).
\end{align*}
In the second term, $[Pv]$ is regarded as a constant function. 
Since $\phi_1(tBD)$ is a linear combination of resolvents, it extends to a bounded operator $L_\infty(\R^n)\to L_2(B(x_0, t_0))$ as in the estimate \eqref{eq:princpartbound}.
For the first term, the decay of $\psi_1$ at $\lambda=0$ and $\infty$ shows that we have square function estimates for $\psi_1(tBD)$.
We calculate for $t>0$ that
\begin{align*}
  Dw &= D e^{-tBD} v= e^{-tDB}\chi^+(DB) f, \\
  \pd_t w &=-BDe^{-tBD}v,
\end{align*}
using $D\phi_1(tBD)[Pv]= \pd_t \phi_1(tBD)[Pv]=0$.
Since $\pd_t w+BD w=0$, we get from Lemma~\ref{lem:cacc} that
$$
  \iint_W |e^{-tDB}\chi^+(DB)f|^2 dtdx=   \iint_W |Dw|^2 dtdx \lesssim \iint_{\widetilde W}|t^{-1} w|^2 dtdx,
$$
with a slightly enlarged Whitney region $\widetilde W= B(x_0, 2t_0)\times (t_0/4, 2t_0)$.
Using square function estimates, we obtain
$$
  \|\tN(\psi_1(tBD)BDv)\|_2^2\lesssim \int_0^\infty \| \psi_1(tBD)BDv \|_2^2 \frac {dt}t\lesssim \|BDv\|_2^2\lesssim \|f\|_2^2,
$$
so it remains to estimate $\phi_1(tBD)(Pv-[Pv])$.
To this end, for fixed $t_0/4<t<2t_0$, use the $L_2$ off-diagonal estimates for the resolvents of $BD$ to get
\begin{multline*}
  \| \phi_1(tBD)(Pv-[Pv]) \|^2_{L_2(B(x_0, 2t_0))} \lesssim \sum_{k=0}^\infty 2^{-km}\| Pv-[Pv]\|^2_{L_2(B(x_0,2^k 2t_0))}
  \\
  \lesssim \sum_{k=0}^\infty 2^{k(-m+3n-2n/p+2)}t_0^{n-2n/p+2}  \| \nabla v\|^2_{L_p(B(x_0,2^k 2t_0))},
\end{multline*}
using the Sobolev--Poincar\'e inequality from Lemma~\ref{lem:poincare} with suitable $1\le p<2=q$, $\Omega= B(x_0,2^k 2t_0)$
and $S= B(x_0, 2t_0)$.
Integrating with respect to $t$, we get
\begin{multline*}
  t_0^{-1-n}\iint_{\widetilde W}|t^{-1} \phi_1(tBD)(Pv-[Pv])|^2 dtdx\\
   \lesssim \sum_{k=0}^\infty 2^{k(-m+3n+2)} M(|\nabla Pv|^p)(x_0)^{2/p}\lesssim  M(|\nabla Pv|^p)(x_0)^{2/p},
\end{multline*}
choosing the parameter $m$ large.
Therefore, boundedness of the Hardy--Littlewood maximal function $M$ on $L_{2/p}$ gives
$$
  \|\tN(e^{-tDB}\chi^+(DB)f)\|_2^2\lesssim\|f\|_2^2+ \|\nabla Pv\|_2^2\lesssim \|f\|_2^2+ \|DPv\|_2^2\lesssim \|f\|_2^2,
$$
by the ellipticity of $D$ on $\clos{\ran(D)}$.

The estimate of the semigroup $e^{tDB}\chi^-(DB)$ follows from the above estimate upon replacing $D$ by $-D$, since $e^{-t(-DB)}\chi^+(-DB)= e^{tDB}\chi^-(DB)$.
This proves the non-tangential maximal function estimate for $\phi(tDB)$.
\end{proof}

\bibliographystyle{acm}

\end{document}